\numberwithin{equation}{section}
\newcommand\reallybigtilde[1]{\ThisStyle{%
		\setbox0=\hbox{$\SavedStyle#1$}%
		\stackengine{-.1\LMpt}{$\SavedStyle#1$}{%
			\stretchto{\scaleto{\SavedStyle\mkern.2mu\AC}{.41\wd0}}{.48\ht0}%
		}{O}{c}{F}{T}{S}%
}}
\newcommand{\Ae}{a\text{.}e\text{.}\ }
\theoremstyle{plain}
\newtheorem{thm}{Theorem}[section]
\newtheorem{prop}[thm]{Proposition}
\newtheorem{lemma}[thm]{Lemma}
\newtheorem{claim}[thm]{Claim}
\theoremstyle{definition}
\newtheorem{deff}[thm]{Definition}
\theoremstyle{remark}
\newtheorem{rmk}[thm]{Remark}
\newtheorem{xmp}[thm]{Example}
\theoremstyle{plain}
\newtheorem{thm*}{Theorem}
\newtheorem{lemma*}{Lemma}
\DeclareMathOperator{\Div}{\mathrm{div}}
\DeclareMathOperator{\N}{\mathbb{N}}
\DeclareMathOperator{\R}{\mathbb{R}}
\DeclareMathOperator{\B}{\mathbb{B}}
\DeclareMathOperator{\A}{\mathcal{A}}
\DeclareMathOperator{\J}{\mathcal{J}}
\DeclareMathOperator{\Sph}{\mathbb{S}}
\DeclareMathOperator{\ee}{\mathrm{e}}
\newcommand{\de}{\mathop{}\!\mathrm{d}}
\newcommand{\deH}{\mathop{}\!\mathrm{d}\mathcal{H}^{n-1}}
\newcommand{\dex}{\mathop{}\!\mathrm{d}x}
\newcommand{\Def}{\vcentcolon=}
\newcommand{\supp}{\mathrm{supp}}
\title[Local boundedness of weak solutions under unbalanced Orlicz growths]{Local boundedness of weak solutions to elliptic equations under unbalanced Orlicz growths}
\author{Gabriele Giannone}
\keywords{Weak solutions; Local boundedness; Unbalanced Orlicz growth; Orlicz-Sobolev inequalities}
\address{Dipartimento di Matematica e Informatica ``U. Dini'', Università di Firenze, Viale Morgagni 67/A, 50134 Firenze, Italy, e-mail: gabriele.giannone@unifi.it}
\subjclass[2020]{35J60; 46E30}
\begin{document}
\begin{abstract}
We establish sufficient conditions for the local boundedness of weak solutions to a broad class of nonlinear elliptic equations in divergence form, under unbalanced growth conditions on the stress field. Our analysis is carried out in a non-variational setting, with no symmetry or structural assumptions on the operator. The ellipticity and growth are prescribed via distinct Young functions, leading to a Orlicz-type setting that captures a wide class of nonstandard behaviors. As a special case, the theory encompasses and extends the known results on equations with $p,q$-growth.
\end{abstract}
\onehalfspacing
\maketitle
\section{Introduction}\label{introduction}
In this paper, we provide a complete proof of the local boundedness of weak solutions to second order elliptic equations in divergence form of the type
\begin{equation}\label{equation}
-\Div(\A(x,\nabla u))=f(x,u)\quad \text{in}\ \Omega,
\end{equation}
where $\Omega$ is an open set in $\R^n$, with $n\ge2$, and $\A\colon \Omega\times\R^n\to\R^n$ and $f\colon\Omega\times\R\to\R$ are Carathéodory maps that satisfy proper growth conditions.

Local boundedness of weak solutions to the equation in \eqref{equation} is guaranteed if $\A$ is subject to lower and upper bounds --- named $p$-growth or natural conditions --- of the form
\begin{gather}\label{p growth}
\A(x,\xi)\cdot\xi \gtrsim |\xi|^{p}\quad\text{and}\quad|\A(x,\xi)|\lesssim |\xi|^{p-1}\quad \text{for \Ae} x\in\Omega\ \text{and all } \xi\in\R^n,
\end{gather}
for some exponent $p>1$. A result of this kind can be found in \cite{Trudinger1967OnHT}, where, in the spirit of \cite{Moser1961OnHT}, an approach based on Harnack type inequalities is adopted. The local Lipschitz continuity of weak solutions is proved in Chapter $4$ of the classical reference book \cite{LadUral68}, employing techniques that, in turn, rely upon methods introduced by De Giorgi \cite{DeGiorgi57} in his regularity theory for linear elliptic equations with merely measurable coefficients.

A regularity theory for elliptic equations whose stress field exhibits different growth rates from below and above --- commonly known as $p,q$-growth --- originates in the work \cite{Kolodii}, which deals with the local boundedness of solutions to certain anisotropic equations. In the literature on this topic, a typical assumption is that \eqref{equation} arises as the Euler–Lagrange equation of a variational integral of the form
$$\mathscr{F}(u,\Omega)=\int\limits_\Omega \mathcal{L}(x,u,\nabla u)\dex,$$
with
$$|\xi|^p-1\lesssim \mathcal{L}(x,z,\xi)\lesssim |\xi|^q+1$$
for \Ae $x\in\R$, every $z\in\R$, and every $\xi\in\R^n$. This corresponds to requiring that $\A_i(x,\xi)=\partial_{\xi_i}\mathcal{L}(x,z,\xi)$, for $i=1,\dots,n$, and $f(x,z)=\partial_{z}\mathcal{L}(x,z,\xi)$. The case $p=q>1$ is considered in the paper \cite{GiaquintaGiusti84}, where, assuming the growth condition only, every quasi-minimizer is proved to be locally H\"older continuous. The study of regularity properties for local minimizers under unbalanced growth conditions began with the works \cite{Marcellini1989,Marcellini1991,Marcellini1993}, and has since received considerable attention. As it is well known, most results in the existing literature require some restrictions on the ratio $q/p$; typically, one assumes
$$\frac{q}{p}<c_n, \ \text{for some constant}\ c_n=c_n(p,q)\ \text{such that } c_n\downarrow1\ \text{as}\ n\to\infty.$$
Under various structure conditions on the lagrangian $\mathcal{L}$, the local boundedness of minimizers of such functionals is treated in \cite{MoscarielloNania,MascoloPapi,CupiniMarcelliniMascolo2015}, while similar questions are addressed in \cite{BoccardoMarcelliniSbordone,FuscoSbordone93,Stroffolini} for problems with anisotropic growths. On the contrary, counterexamples in \cite{Giaquinta1987,Marcellini1987,Marcellini1991} demonstrate the existence of unbounded minimizers when $p$ and $q$ are too far apart. More specifically, Theorem $6.1$ in \cite{Marcellini1991} constructs unbounded solutions to the Euler-Lagrange equation of a variational integral with $p,q$-growth, under the assumption
$$q>\frac{(n-1)p}{n-1-p}=\vcentcolon p^{\star}_{n-1}.$$
The gap between the assumptions on $p$ and $q$ in these examples and in the regularity results available until recently has been filled in the paper \cite{HirschSchaffner}, where the local boundedness of minimizers is established for the full range $q\le p^{\star}_{n-1}$.

Less common in the literature is the study of differential equations that are not the Euler first variation of an energy integral. Examples of such problems can be found, for instance, in \cite{CupiniMarcelliniMascolo24ARMA}. A distinctive feature of the non-variational setting, unlike the variational one, is that although Equation \eqref{equation} remains elliptic and coercive in $W^{1,p}_\mathrm{loc}(\Omega)$ under $p,q$-growth, the notion of weak solution is consistent only for functions that belong, a priori, to $W^{1,q}_\mathrm{loc}(\Omega)$. The issues of local Lipschitz continuity and $C^\infty$ regularity of weak solutions are addressed in \cite{Marcellini1991}, whereas the boundedness of weak solutions to Dirichlet problems under anisotropic growth conditions is studied in \cite{BoccardoMarcelliniSbordone}. More recently, adapting the strategy of \cite{HirschSchaffner}, the authors of \cite{CupiniMarcelliniMascolo23} proved local boundedness of weak solutions under the balance condition
$$q<\frac{np}{n-1}.$$

In the present paper, without assuming any variational structure for the equation, we address the question of local boundedness of weak solutions under more general assumptions on the stress field: namely, unbalanced Orlicz-type growth conditions are imposed on $\A$. This means that the growth and ellipticity of $\A$ are dictated by Young functions --- i.e., non-negative convex functions vanishing at the origin. When the growth rate of the stress field is prescribed in terms of the same Young function, local boundedness results have been obtained in \cite{Korolev1990}, and local H\"older continuity of bounded solutions is proved in \cite{Lieberman1991}. The boundedness of local minimizers for variational integrals with anisotropic Orlicz growths is treated in \cite{Cianchi2000147}.

In contrast, we prescribe the ellipticity and growth of $\A$ via two \emph{different} Young functions $A$ and $B$, with the model assumption:
\begin{gather*}
\A(x,\xi)\cdot\xi \ge A(|\xi|)\quad\text{and}\quad|\A(x,\xi)|\le b(|\xi|)\quad \text{for \Ae} x\in\Omega\ \text{and all } \xi\in\R^n,
\end{gather*}
where $b(|\xi|)=B'(|\xi|)$ for \Ae $\xi\in\R^n$. Natural conditions are also imposed on the reaction term $f(x,u)$. As in the polynomial-growth setting, the relevant notion of weak solution is well-posed only in the Orlicz-Sobolev class $W^1L^B_\mathrm{loc}(\Omega)$. The same problem for local quasi-minimizers of variational integrals under these assumptions has been studied in \cite{CianchiSchaffner}, where a sharp balance condition between the Young functions $A$ and $B$ is exhibited. The results for functionals with $p,q$-growth emerge as a special case of this framework.

The interest in establishing local boundedness of weak solutions stems from the higher regularity theory, where local boundedness serves as a starting assumption to prove further interior regularity properties of $u$ --- including higher integrability, local Lipschitz continuity, local $C^{1,\alpha}$ regularity, and beyond. In this regard, a strong motivation for our study lies in the paper \cite{Marcellini1993}, that, under pointwise unbalanced bounds on the matrix $(\partial_{\xi_j}\A_i(x,\xi))$ of possibly non-polynomial rate, establishes the local Lipschitz continuity of any weak solution that is \emph{assumed} to belong to $L^\infty_\mathrm{loc}(\Omega)$. The crucial role of local boundedness results in higher regularity theory is discussed, for instance, in \cite{BaroniColomboMingione,BousquetBrasco,DeFilippisMingione21AMP}.

As mentioned above, beyond boundedness, many aspects of the regularity theory for nonlinear elliptic equations and systems with unbalanced growth conditions have been investigated. Without aiming for completeness, we refer to \cite{EspositoLeonettiMingione1999,EspositoLeonettiMingione2004,ColomboMingione2015,BaroniColomboMingione,BousquetBrasco,BeckMingione,BellaSchaffner2020,DeFilippisMingione21AMP,,DeFilippisMingione21ARMA,DeFilippisMingione2023,BellaSchaffner2024}, and point the reader to the surveys \cite{MingioneRadulescu,Marcellini2021} which outline recent developments and open problems in the field.

\subsection{Organization of the paper} The paper is organized as follows: Section \ref{Orlicz-Sobolev spaces} collects the basic notations and results concerning Orlicz spaces; in Section \ref{main results}, we delineate the precise framework of the problem and present the main theorem, accompanied by relevant comments; Section \ref{Technical lemmas} contains the technical preliminaries necessary for the proof, which is carried out in Section \ref{Proof of the Theorem}.

\section{Background}\label{Orlicz-Sobolev spaces}
In this section, we recall some basic definition and property from the theory of Young functions, Orlicz spaces, and Orlicz-Sobolev spaces, with special attention to those results that play a crucial role in our proofs. We refer the reader to \cite{RaoRen,RaoRenApplications} for a comprehensive presentation of this theory.
\subsection{Young functions} A function $A\colon[0,\infty)\to[0,\infty]$ is said to be a Young function if it is convex, left-continuous, neither identically zero nor infinity, and $A(0)=0$. This very definition implies that
$$t\mapsto\frac{A(t)}{t}$$
is increasing in $[0,\infty)$. Note that $A$ is allowed to jump to infinity, which is the only case in which the left-continuity assumption is relevant.

We say that a Young function has \emph{super-linear} growth if
$$\lim_{t\to\infty}\frac{A(t)}{t}=\infty,$$
otherwise we say that $A$ has \emph{linear} growth.

Every Young function $A$ admits the representation
\begin{equation}\label{integral representation}
A(t)=\int\limits_0^t a(s)\de s\quad\text{for all}\ t\in[0,\infty)
\end{equation}
where $a\colon[0,\infty)\to[0,\infty]$ is an increasing, left-continuous function such that $a(0)=0$. It holds that
\begin{equation}\label{inequalities for A and a}
A(t)\le ta(t)\le A(2t).
\end{equation}
We call the function $a$ the \emph{left-derivative} of $A$. This terminology comes from the trivial observation that, in the interval where $A$ is finite,
$$A(t)=\int\limits_0^t A'_{-}(s)\de s$$
where $A'_{-}(\cdot)$ denotes the left-derivative of $A$, which exists finite at $s$ as soon as $A(s)$ is finite.\\
Throughout this paper, Young functions will be denoted by capital letters, and their left-derivatives by the corresponding lower case letters.

The Young conjugate of \emph{any} function $\Phi\colon[0,\infty)\to[0,\infty]$ is the \emph{convex} function given by
$$\widetilde{\Phi}(s)=\sup\{ts-\Phi(t) : t\ge0\}\quad\text{for all}\ t\in[0,\infty).$$
An immediate consequence of this definition is the so-called Young’s inequality
$$ts\le \Phi(t)+\widetilde{\Phi}(s)\quad\text{for all}\ t,s\ge0.$$
If $A$ is a Young function, one can verify that
$$\widetilde{A}(t)=\int\limits_0^t a^{-1}(s)\de s\quad\text{for}\ t\in[0,\infty)$$
where $a^{-1}$ is the generalized left-continuous inverse of $a$, namely
\begin{equation*}
	a^{-1}(s)=\inf\{t\ge0 : a(t)\ge s\}.
\end{equation*}
Moreover, it holds that
\begin{equation}\label{inequality between A conjugate and A}
\widetilde{A}(a(t))\le ta(t)
\end{equation}
and
\begin{equation}\label{inequality between inverses of A conjugate and A}
t\le A^{-1}(t)\widetilde{A}^{-1}(t)\le 2t
\end{equation}
for all $t\ge0$, where $A^{-1}$ and $\widetilde{A}^{-1}$ denote the generalized right-continuous inverses of $A$ and $\widetilde{A}$. Recall that, if $\Phi\colon[0,\infty)\to[0,\infty]$ is increasing, such an inverse is given by
\begin{equation}\label{generalized right continuous inverse}
\Phi^{-1}(s)\Def\inf\{t\ge0 : \Phi(t)>s\}\quad \text{for all}\ s\ge0.
\end{equation}

A Young function $A\colon[0,\infty)\to[0,\infty]$ is said to satisfy the $\Delta_2$-condition globally --- briefly $A\in\Delta_2$ globally --- if there exists a constant $c$ such that
$$A(2t)\le cA(t)\quad \text{for all}\ t\ge0.$$
We say that $A$ satisfies the $\Delta_2$-condition near infinity if the latter inequality holds for large $t$ only. One can verify that
\begin{gather}\label{equivalent definition of Delta2}
A\in\Delta_2\ \text{near infinity iff there exist}\ t_0\ge0\ \text{and}\ q\ge1\ 
\text{such that}\ \sup_{t\ge t_0}\frac{ta(t)}{A(t)}\le q.
\end{gather}

We will take advantage of a notion of index of a Young function related to its asymptotic growth behavior. We define the lower Matuszewska--Orlicz index of $A$ at infinity as
\begin{equation}\label{upper Matuszewska--Orlicz at infinity}
i_\infty(A)=\lim_{\mu\to+\infty}\frac{\log\big(\liminf_{t\to+\infty}\frac{A(\mu t)}{A(t)}\big)}{\log\mu}.
\end{equation}
The upper Matuszewska--Orlicz index of $A$ at infinity is defined similarly to \eqref{upper Matuszewska--Orlicz at infinity}, but with $\liminf_{t\to+\infty}$ replaced by $\limsup_{t\to+\infty}$. A crucial property of these indices is that they are invariant under replacement of $A$ by any equivalent Young function near infinity. In general, one has that
$$1\le i_\infty(A)\le I_\infty(A)\le\infty.$$
Clearly, if $A(t)\approx t^p(\log t)^\alpha$ near infinity, where either $p>1$ and $\alpha\in\R$, or $p=1$ and $\alpha\ge0$, then $i_\infty(A)=I_\infty(A)=p$. An example of a Young function exhibiting oscillatory behavior at infinity is
$$A(t)\approx t^{p+\alpha\sin(\log\log t)}\quad\text{near infinity},$$
with $p>1+\alpha\sqrt{2}$. Variational integrals whose growth is described by such functions have been considered in \cite{Marcellini1993,MascoloPapi}.

A function $A$ dominates another function $B$ globally (respectively, near infinity) if there exists a positive constant $c$ such that $B(t)\le A(ct)$ for all $t\ge0$ (respectively, for all $t\ge t_0\ge0$). If this is the case, we write $A\lesssim B$ globally (respectively, near infinity). $A$ and $B$ are said to be equivalent globally (respectively, near infinity) if $A$ dominates $B$ and vice versa globally (respectively, near infinity), and we write $A\approx B$ globally (respectively, near infinity).

\subsection{Orlicz and Orlicz-Sobolev spaces}
Let $A$ be a Young function, and $\Omega$ be a measurable set in $\R^n$. The Orlicz class $K^A(\Omega)$ is defined by
$$K^A(\Omega)=\bigg\{u: u\ \text{is measurable in}\ \Omega\ \text{and}\ \int\limits_\Omega A(|u|)\dex<\infty\bigg\}.$$
The Orlicz space $L^A(\Omega)$ built upon $A$ is the linear span of $K^A(\Omega)$. $L^A(\Omega)$ is a Banach function space, endowed with the Luxemburg norm
$$\|u\|_{L^A}=\inf\bigg\{\tau>0 :\int\limits_\Omega A\left(\frac{|u|}{\tau}\right)\dex\le1\bigg\}.$$
$K^A(\Omega)$ is a linear space --- that is, $K^A(\Omega)=L^A(\Omega)$ --- if and only if either $A\in\Delta_2$ globally, or $A\in\Delta_2$ near infinity and $|\Omega|<\infty$.

If $A(t)=t^p$ for some $p\in[1,\infty)$, then $L^A(\Omega)=L^p(\Omega)$, whereas $L^A(\Omega)=L^\infty(\Omega)$ if $A(t)=0$ for $t\in[0,1]$ and $A(t)=\infty$ for $t\in(1,\infty)$.

The continuous embedding
$$L^B(\Omega)\longrightarrow L^A(\Omega)$$
holds if either $B$ dominates $A$ globally, or $B$ dominates $A$ near infinity and $|\Omega|<\infty$.

Assume now that $\Omega$ is an open set. The (inhomogeneous) Orlicz-Sobolev class $W^1K^A(\Omega)$ is defined as
\begin{equation}\label{orlicz-sobolev class}
W^1K^A(\Omega)=\{u\in K^A(\Omega) : |\nabla u|\in K^A(\Omega)\}.
\end{equation}
Accordingly, the (inhomogeneous) Orlicz-Sobolev space $W^1L^A(\Omega)$ is given by
\begin{equation}\label{orlicz-sobolev space}
W^1L^A(\Omega)=\{u\in L^A(\Omega) : |\nabla u|\in L^A(\Omega)\}.
\end{equation}
The latter is a Banach space equipped with the norm
$$\|u\|_{W^{1}L^A}=\|u\|_{L^A}+\|\nabla u\|_{L^A}.$$
In the case when $L^A(\Omega) =L^p(\Omega)$ for some $p\in[1,\infty]$, the standard Sobolev space $W^{1,p}(\Omega)$ is recovered.

The local versions $K^A_\mathrm{loc}(\Omega)$, $L^A_\mathrm{loc}(\Omega)$, $W^1K^A_\mathrm{loc}(\Omega)$, and $W^1L^A_\mathrm{loc}(\Omega)$ are defined in the usual way by localizing the definitions given above. Membership of a function in the local Orlicz class $K^A_\mathrm{loc}$ is unaffected if $A$ is replaced by any Young function equivalent to it near infinity. This is because the behavior of $A$ near zero plays no role when integrals are taken over sets of finite measure.

Our analysis makes use of Orlicz-Sobolev classes of weakly differentiable functions defined on the sphere $\Sph^{n-1}$. These classes, denoted by $W^1K^A(\Sph^{n-1})$, are defined as in \eqref{orlicz-sobolev class}, except that the Lebesgue measure is replaced by the $(n-1)$-dimensional Hausdorff measure $\mathcal{H}^{n-1}$, and $\nabla u$ is replaced by $\nabla_{\Sph}u$, the weak covariant derivative of $u$ on $\Sph^{n-1}$. For a detailed treatment of Sobolev spaces on manifolds, see, e.g., \cite{Hebey}.

Sharp embedding theorems in Orlicz-Sobolev spaces play a key role in the formulation of our main result. Moreover, the proof relies heavily on the corresponding sharp inequalities. As shown in \cite{Cianchi1997} --- and, in an equivalent formulation, in  \cite{Cianchi1996} --- the optimal $n$-dimensional Sobolev conjugate of a Young function $A$ satisfying
\begin{equation}\label{integral near zero for definition of conjugate}
\int\limits_0 \left(\frac{t}{A(t)}\right)^{\frac{1}{n-1}}\de t<\infty
\end{equation}
is given by
\begin{equation}\label{definition of d conjugate}
A_{n}(t)=A(H_{n}^{-1}(t))\quad\text{for}\ t\ge0,
\end{equation}
where the function $H_{n}\colon[0,\infty)\to[0,\infty)$ is defined by
\begin{equation}\label{definition of H_d}
H_{n}(s)=\Big(\int_0^s\left(\frac{t}{A(t)}\right)^{\frac{1}{n-1}}\de t\Big)^{\frac{1}{n'}}\quad\text{for}\ s\ge0.
\end{equation}
For $n\ge3$, the $(n-1)$-dimensional Sobolev conjugate is defined analogously, with $n$ replaced throughout by $n-1$ in \eqref{integral near zero for definition of conjugate}, \eqref{definition of d conjugate}, and \eqref{definition of H_d}.

If $\rho$ is a positive radius, we denote the mean of $u$ over the ball $\B_\rho=\B_\rho(0)$ by
$$(u)_{\B_\rho}=\fint\limits_{\B_\rho} u\dex=\frac{1}{\omega_n\rho^n}\int\limits_{\B_\rho} u\dex,$$
where $\omega_n$ is the measure of the unit ball.

Both Sobolev-Poincaré and Sobolev inequalities on balls in $\R^n$ will be used in our proof. They are stated in the next two results, and their proofs can be found in Theorems $3.1$ and $4.4$ in \cite{Cianchi2006745}.
\begin{thm}\label{Orlicz-Sobolev embedding}
Let $n\ge2$, let $\rho>0$, and let $A$ be a Young function fulfilling the condition \eqref{first case main thm}.

$(i)$ Assume that
$$\int\limits^\infty \Big(\frac{t}{A(t)}\Big)^{\frac{1}{n-1}}\de t=\infty.$$
Then, there exists a constant $\gamma=\gamma(n,\rho)>0$ such that
\begin{equation}\label{first case orlicz-sobolev}
\int\limits_{\B_\rho}A_n\Bigg(\frac{|u|}{\gamma \big(\int_{\B_\rho}A(|u|)+A(|\nabla u|)\dex\big)^{\frac{1}{n}}}\Bigg)\dex\le\int\limits_{\B_\rho}A(|u|)+A(|\nabla u|)\dex
\end{equation}
for every $u\in W^1K^A(\B_\rho)$.

$(ii)$ Assume that
$$\int\limits^\infty \Big(\frac{t}{A(t)}\Big)^{\frac{1}{n-1}}\de t<\infty.$$
Then, there exists a constant $\gamma=\gamma(n,\rho,A)>0$ such that
$$\|u\|_{L^\infty(\B_\rho)}\le\gamma\bigg(\int\limits_{\B_\rho}A(|u|)+A(|\nabla u|)\dex\bigg)^{\frac{1}{n}}$$
for every $u\in W^1K^A(\B_\rho)$.

In particular, if $\rho\in[r_1,r_2]$, the constant $\gamma$ depends on $\rho$ only via $r_1$ and $r_2$.
\end{thm}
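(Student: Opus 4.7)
The plan is to reduce both statements to a one-dimensional Hardy-type inequality via extension, symmetric decreasing rearrangement, and the change of variables already built into \eqref{definition of H_d}. First I would construct an extension operator producing $\tilde u\in W^1K^A(\R^n)$ with $\supp\tilde u\subset\B_{2\rho}$ and
$$\int_{\R^n} A(|\tilde u|)+A(|\nabla\tilde u|)\dex\le c(n,\rho)\int_{\B_\rho} A(|u|)+A(|\nabla u|)\dex.$$
Reflection across $\partial\B_\rho$ followed by a Lipschitz cutoff does the job: convexity of $A$ prevents the Orlicz modular from blowing up, and the monotonicity of $A(t)/t$ together with the Lipschitz character of the cutoff controls the gradient piece, uniformly for $\rho\in[r_1,r_2]$.

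Next I would pass to the symmetric decreasing rearrangement $\tilde u^*$ of $\tilde u$ on $\R^n$. The Orlicz version of the P\'olya--Szeg\H{o} principle, valid because $A$ is convex and vanishes at $0$, yields
$$\int_{\R^n} A(|\nabla\tilde u^*|)\dex\le\int_{\R^n} A(|\nabla\tilde u|)\dex,$$
while the modular $\int A(|\tilde u|)\dex$ is preserved by equimeasurability. Writing $\tilde u^*(x)=v(|x|)$ with $v\colon[0,\infty)\to[0,\infty)$ nonincreasing reduces the question to a weighted $1$D statement controlling $v$ in terms of $\int_0^\infty A(-v'(r))\,r^{n-1}\de r$. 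The substitution $s=H_n(v(r))$, whose exponent $1/n'$ in \eqref{definition of H_d} is precisely tuned so that the weight $r^{n-1}$ and the nonlinearity $A$ balance, gives after a Hardy-type manipulation
$$\int_{\R^n} A_n\bigl(|\tilde u^*|/(\gamma M^{1/n})\bigr)\dex\le M,$$
with $M=\int A(|\nabla\tilde u^*|)\dex$ and $A_n=A\circ H_n^{-1}$ as in \eqref{definition of d conjugate}. Combining this with the extension estimate produces \eqref{first case orlicz-sobolev} together with the claimed dependence of $\gamma$ on $\rho$.

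For part $(ii)$ the same reduction applies, but the finiteness of $\int^\infty(t/A(t))^{1/(n-1)}\de t$ forces $H_n$ to be bounded; hence the one-dimensional estimate immediately delivers a pointwise bound on $v(0)=\tilde u^*(0)=\|\tilde u\|_{L^\infty(\R^n)}\ge\|u\|_{L^\infty(\B_\rho)}$ of the announced shape, with $\gamma$ depending on $A$ precisely through the value of this convergent integral.

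The step I expect to be most delicate is the Orlicz P\'olya--Szeg\H{o} inequality when $A$ fails a $\Delta_2$ condition, in particular when $A$ may jump to infinity: a truncation-and-approximation argument is needed to make sense of the coarea computation through superlevel sets and then pass to the limit. The other technical point, comparatively routine but necessary for applications in Section \ref{Proof of the Theorem}, is to track the extension constant as a function of $\rho$ and verify its uniform boundedness on $[r_1,r_2]$, as announced in the last sentence of the statement.
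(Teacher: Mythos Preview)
The paper does not prove this theorem: the sentence immediately preceding the statement reads ``For their proof, see Theorems $3.1$ and $4.4$ in \cite{Cianchi2006745}.'' The result is imported as a tool from the literature, so there is no in-paper argument to compare your proposal against.

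That said, your outline---extension to a compactly supported function on $\R^n$, Orlicz P\'olya--Szeg\H{o}, reduction to a one-dimensional Hardy-type inequality---is indeed the skeleton of Cianchi's proofs of such embeddings. One caveat: the substitution you single out, $s=H_n(v(r))$, is not what actually carries the argument in the cited works; there the one-dimensional step proceeds by passing to the decreasing rearrangement on the \emph{measure} variable, using the isoperimetric/Talenti pointwise bound $u^{*}(s)\le c_n\int_s^{\infty}|\nabla u|^{*}(r)\,r^{1/n-1}\de r$, and then invoking a Hardy-type inequality in Orlicz spaces for which $A_n=A\circ H_n^{-1}$ is precisely the optimal target. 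Your identification of the two genuinely delicate points---P\'olya--Szeg\H{o} when $A$ need not satisfy $\Delta_2$, and the uniform dependence of the extension constant on $\rho\in[r_1,r_2]$---is accurate.
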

\begin{thm}\label{Sobolev-Poincaré inequality}
Let $n\ge2$, let $\rho>0$, and let $A$ be a Young function fulfilling the condition \eqref{first case main thm}. Then there exists a constant $\gamma=\gamma(n)>0$ such that
$$\int\limits_{\B_\rho} A_n\Bigg(\frac{|u-(u)_{\B_\rho}|}{\gamma\big(\int_{\B_\rho}A(|\nabla u|)\dex\big)^{\frac{1}{n}}}\Bigg)\dex\le \int\limits_{\B_\rho}A(|\nabla u|)\dex$$
for every $u\in W^1K^A(\B_\rho)$.
\end{thm}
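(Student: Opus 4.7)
The plan is to derive this Sobolev--Poincar\'e inequality via a symmetrization argument in the spirit of Maz'ya, combined with Cianchi's characterization of the sharp Orlicz Sobolev conjugate $A_n$ already exploited in Theorem \ref{Orlicz-Sobolev embedding}. The fact that the constant is purely dimensional, $\gamma=\gamma(n)$, strongly suggests that every estimate must be funneled through the relative isoperimetric inequality on $\B_\rho$, whose constant depends only on $n$, rather than through any generic Poincar\'e-type bound $\int A(|v|)\lesssim \int A(|\nabla v|)$, whose constant would inevitably depend on $\rho$ and on $A$.

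\medskip

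First I would replace the mean $(u)_{\B_\rho}$ by a median $c$ of $u$ on $\B_\rho$: via the triangle inequality, the convexity of $A_n$, and the elementary bound $|(u)_{\B_\rho}-c| \leq \fint_{\B_\rho} |u-c|\dex$, the problem reduces, up to an absolute multiplicative constant to be absorbed into $\gamma$, to proving the analogous estimate with $(u)_{\B_\rho}$ replaced by any median $c$. The decisive property of $c$ is that $|\{u>c\}|$ and $|\{u<c\}|$ are each at most $|\B_\rho|/2$. Setting $v=u-c$, I would then apply the relative isoperimetric inequality on $\B_\rho$ separately to the level sets of $v^+$ and $v^-$ and combine it with the coarea formula, obtaining the Maz'ya-type differential inequality
$$-\frac{d}{ds} v^{\pm,*}(s) \,\leq\, \frac{c(n)}{s^{(n-1)/n}}\,(|\nabla u|)^*(s),\qquad 0<s<\tfrac{|\B_\rho|}{2},$$
for the decreasing rearrangements of $v^{\pm}$, and by integration the pointwise bound
$$v^{\pm,*}(s)\,\leq\, c(n)\int_s^{|\B_\rho|/2} (|\nabla u|)^*(\sigma)\,\sigma^{-(n-1)/n}\,\de\sigma.$$

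\medskip

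The concluding step is a Hardy-type inequality in Orlicz norm. Writing $K=\big(\int_{\B_\rho}A(|\nabla u|)\dex\big)^{1/n}$ and passing to rearrangements,
$$\int_{\B_\rho}A_n\!\left(\frac{|v|}{\gamma K}\right)\dex \,=\, \int_0^{|\B_\rho|}A_n\!\left(\frac{v^*(s)}{\gamma K}\right)\de s,$$
the kernel $\sigma^{-(n-1)/n}$ appearing in the pointwise bound is precisely the one for which the function $H_n$ in \eqref{definition of H_d} is tuned: the identity $A_n=A\circ H_n^{-1}$ absorbs it exactly, and a Hardy-type estimate for Young functions, analogous to the one underlying Theorem \ref{Orlicz-Sobolev embedding}, reduces the right-hand side to a dimensional multiple of $\int_{\B_\rho}A(|\nabla u|)\dex$. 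A final convexity step, exploiting $A_n(t/\lambda)\leq A_n(t)/\lambda$ for $\lambda\geq 1$, allows this multiple to be absorbed into $\gamma$. The main obstacle I foresee is tracking that \emph{every} constant accumulated along the way, from the relative isoperimetric inequality, from the median-to-mean passage, and from the Orlicz Hardy estimate, is dimensional with no hidden dependence on $\rho$ or on $A$; this ultimately rests on the sharp calibration of $A_n$ encoded in \eqref{definition of d conjugate}--\eqref{definition of H_d}.
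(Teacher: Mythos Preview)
The paper does not supply its own proof of this statement: it is quoted as a background result, with the proof deferred to Theorems~3.1 and~4.4 of \cite{Cianchi2006745}. Your sketch --- replace the mean by a median, combine the relative isoperimetric inequality on $\B_\rho$ with the coarea formula to obtain a Maz'ya-type pointwise bound on the decreasing rearrangement, then close with an Orlicz Hardy inequality calibrated to $H_n$ so that $A_n=A\circ H_n^{-1}$ absorbs the kernel $\sigma^{-(n-1)/n}$ --- is precisely the machinery of that reference, so there is no alternative argument in the paper to compare against: you have correctly reconstructed the outsourced proof, and your worry about keeping every constant purely dimensional is exactly the point settled there.
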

Foundational to our approach is the following analogue of Theorem \ref{Orlicz-Sobolev embedding} for Orlicz-Sobolev functions on $\Sph^{n-1}$. As in Theorem \ref{Orlicz-Sobolev embedding}, we present the statement in two parts. The first part corresponds to $(i)$ of Theorem $\mathrm{C}$ in \cite{CianchiSchaffner}, while minor modifications in the proof of $(ii)$ of that theorem lead to the second part.
\begin{thm}\label{Orlicz-Sobolev on spheres}
Let $n\ge2$ and let $A$ be a Young function such that
$$\int\limits_0\Big(\frac{t}{A(t)}\Big)^{\frac{1}{n-2}}\dex<\infty\quad\text{if}\ n\ge3.$$

$(i)$ Assume that
$$n\ge3\quad\text{and}\quad \int\limits^\infty\Big(\frac{t}{A(t)}\Big)^{\frac{1}{n-2}}\dex=\infty.$$
Then, there exists a constant $\gamma=\gamma(n)>0$ such that
$$\int\limits_{\mathbb{S}^{n-1}}A_{n-1}\Bigg(\frac{|u|}{\gamma \big(\int_{\mathbb{S}^{n-1}}A(|u|)+A(|\nabla_{\mathbb{S}} u|)\deH\big)^{\frac{1}{n-1}}}\Bigg)\dex\le\int\limits_{\mathbb{S}^{n-1}}A(|u|)+A(|\nabla_{\mathbb{S}} u|)\deH$$
for every $u\in W^1K^A(\Sph^{n-1})$.

$(ii)$ Assume that there exist $p\in[1,\infty)$ and a positive constant $c$ such that $A(t)\ge ct^p$ for $t\ge0$. Let one of the following situations occur:
$$\begin{cases}
n=2,\\
n\ge 3 \quad\text{and}\quad \displaystyle\int\limits^\infty\Big(\frac{t}{A(t)}\Big)^{\frac{1}{n-2}}\dex<\infty.
\end{cases}$$
Then, there exists a constant $\gamma=\gamma(n,p,A)>0$ such that
$$\|u\|_{L^\infty(\Sph^{n-1})}\le\gamma\bigg(\int\limits_{\mathbb{S}^{n-1}}A(|u|)+A(|\nabla_{\mathbb{S}} u|)\deH\bigg)^\frac{1}{p}$$
for every $u\in W^1K^A(\Sph^{n-1})$.
\end{thm}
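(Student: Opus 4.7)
Since part $(i)$ of the statement is exactly $(i)$ of Theorem~C in \cite{CianchiSchaffner}, for it I would simply cite that reference. My plan for part $(ii)$ is to distinguish the two cases in the hypothesis and treat them separately.

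When $n=2$, the sphere $\mathbb{S}^1$ is one-dimensional, so the standard Sobolev embedding in one variable provides
$$\|u\|_{L^\infty(\mathbb{S}^1)}\le C(p)\bigl(\|u\|_{L^p(\mathbb{S}^1)}+\|\nabla_{\mathbb{S}} u\|_{L^p(\mathbb{S}^1)}\bigr)$$
for every $p\ge 1$. The assumption $A(t)\ge ct^p$ converts the $L^p$-norms into modular expressions via $\int_{\mathbb{S}^1}|u|^p\,\de\mathcal{H}^1\le c^{-1}\int_{\mathbb{S}^1}A(|u|)\,\de\mathcal{H}^1$ and its analogue for the gradient; taking $p$-th roots and combining would yield the desired inequality with exponent $1/p$.

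For $n\ge 3$ under the convergence condition, my plan is to follow the proof of $(ii)$ of Theorem~C in \cite{CianchiSchaffner}. That argument rests on a spherical rearrangement inequality yielding a pointwise bound for the decreasing rearrangement $u^\ast$; under the convergence hypothesis $\int^\infty(t/A(t))^{1/(n-2)}\,\de t<\infty$, this bound is finite at the origin and thus produces an $L^\infty$ estimate on $\mathbb{S}^{n-1}$. The original estimate is naturally formulated in terms of the Luxemburg norms $\|u\|_{L^A(\mathbb{S}^{n-1})}+\|\nabla_{\mathbb{S}} u\|_{L^A(\mathbb{S}^{n-1})}$; the \emph{minor modification} needed here is to recast it in the modular form with exponent $1/p$. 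I would carry this out by a scaling argument: normalize $u$ so that $\int_{\mathbb{S}^{n-1}}A(|u|)+A(|\nabla_{\mathbb{S}} u|)\,\deH=1$, apply the unit-level $L^\infty$ estimate inherited from the CianchiSchaffner bound, and invert the rescaling using the polynomial lower bound $A(t)\ge ct^p$ to extract the factor $1/p$ in the final exponent.

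The main obstacle lies in the case $n\ge 3$: the non-homogeneity of a general Young function $A$ makes it nontrivial to produce a clean power of the modular on the right-hand side, and the polynomial lower bound must be inserted at precisely the right point in the rearrangement argument of \cite{CianchiSchaffner} so that the exponent $1/p$ comes out as stated. The case $n=2$ is essentially elementary by comparison, and part $(i)$ is a direct citation.
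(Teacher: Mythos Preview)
Your proposal is correct and matches the paper's own treatment: the paper does not prove this theorem in detail but simply states that part $(i)$ is $(i)$ of Theorem~C in \cite{CianchiSchaffner}, and that part $(ii)$ follows from ``minor changes in the proof of $(ii)$ of the latter''. Your plan---citing \cite{CianchiSchaffner} for $(i)$, handling $n=2$ via the elementary one-dimensional Sobolev embedding combined with $A(t)\ge ct^p$, and for $n\ge3$ inserting the polynomial lower bound into the rearrangement argument of \cite{CianchiSchaffner} to extract the exponent $1/p$---is precisely the kind of minor modification the paper alludes to.
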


\section{Main result}\label{main results}
Let $A$ and $B$ be Young functions. We impose an $A,B$-growth condition on $\A$, namely, we assume
\begin{equation}\label{ellipticity}
\A(x,\xi)\cdot\xi\ge A(|\xi|)\quad\text{for \Ae} x\in\Omega\ \text{and all}\ \xi\in\R^n,\ |\xi|\ge R,
\end{equation}
and
\begin{equation}\label{growth}
	|\A(x,\xi)|\le b(|\xi|)+\phi(x)\quad\text{for \Ae} x\in\Omega\ \text{and all}\ \xi\in\R^n,
\end{equation}
where $R\ge0$ is a constant, $\phi$ is a non-negative measurable function satisfying suitable integrability assumptions, and $b\colon[0,\infty)\to[0,\infty]$ is the left-derivative of $B$ (whose definition has been recalled in Section \ref{Orlicz-Sobolev spaces}).

Concerning the reaction term, we assume that there exist a Young function $E$ and a non-negative measurable function $\psi$, satisfying conditions to be specified later, such that
\begin{equation}\label{growth on f}
|f(x,z)|\le \psi(x)+\frac{E(|z|)}{|z|}\quad\text{for \Ae} x\in\Omega\ \text{and all}\ z\in\R.
\end{equation}

Throughout the paper, we assume that
\begin{equation}\label{integral at infinity is infinite}
\int\limits^\infty \left(\frac{t}{B(t)}\right)^{\frac{1}{n-1}}\de t=\infty,
\end{equation}
otherwise part $(ii)$ of Theorem \ref{Orlicz-Sobolev embedding} implies that every function $u\in W^1L^{B}_{\mathrm{loc}}(\Omega)$ is locally bounded, independently of whether it is a weak solution to Equation \eqref{equation} or not.

A useful consequence of \eqref{integral at infinity is infinite} is that the lower Matuszewska--Orlicz index of $B$ at infinity, $i_\infty(B)$, defined in \eqref{upper Matuszewska--Orlicz at infinity}, satisfies
\begin{equation}\label{lower index of B is smaller than n}
i_\infty(B)\le n.
\end{equation}
To see this, first note that $i_\infty(B)$ must be finite; otherwise, by the very definition of the index, $B$ would grow faster than any polynomial, contradicting \eqref{integral at infinity is infinite}. Now, given any $\varepsilon>0$, again by the definition of $i_\infty(B)$ it follows that
\begin{equation}\label{B dominates small powers}
B\ \text{dominates}\ t^{i_\infty(B)-\varepsilon}\ \text{near infinity.}
\end{equation}
If, by contradiction, $i_\infty(B)$ were strictly bigger than $n$, then \eqref{integral at infinity is infinite} and \eqref{B dominates small powers} would be incompatible for  $0<\varepsilon<i_\infty(B)-n$. This proves \eqref{lower index of B is smaller than n}.
\subsection{Setting of the problem}
From now on, $b^{-1}\colon[0,\infty)\to[0,\infty]$ will denote the generalized left-continuous inverse of the left-derivative $b$, i.e.,
$$b^{-1}(s)=\inf\{t\ge0 : b(t)\ge s\}\quad\text{for all}\ s\ge0.$$
We assume some restrictions on the growth rates of $B$ and $E$. Precisely, we assume that
\begin{equation}\label{Delta2 assumption}
B\ \text{satisfies the } \Delta_2\text{-condition near infinity}
\end{equation}
and
\begin{equation}\label{Delta2 on E}
E\ \text{is super-linear and satisfies the } \Delta_2\text{-condition near infinity}.
\end{equation}
Moreover, a natural growth condition at infinity is imposed on $E$, meaning that we suppose
\begin{equation}\label{domination on E}
E(t)\le A_n(ct)\quad\text{for all}\ t\ge t_0
\end{equation}
for some $c>0$ and $t_0\ge0$, where $A_n$ is the optimal $n$-dimensional Sobolev conjugate of $A$.

The $\Delta_2$-condition ensures that $B$ does not grow too fast at infinity. In particular, by \eqref{equivalent definition of Delta2}, there exist $q\ge1$, $c>0$, and $t_1\ge0$ such that
\begin{equation}\label{domination of derivative}
b(t)\le q\frac{B(t)}{t}\quad \text{for all}\ t\ge t_1
\end{equation}
and
\begin{equation}\label{estimate of B with power}
B(t)\le c t^{q}\quad\text{for all}\ t\ge t_1.
\end{equation}
As a consequence, the left-derivative $b$ is finite-valued, and $B$ grows at most polynomially. Moreover, $B$ has linear growth if and only if $b$ is bounded.

From \eqref{Delta2 assumption}, we also infer that the upper Matuszewska--Orlicz index $I_\infty(B)$ is finite. This follows from the inequality
$$\frac{B(\mu t)}{B(t)}\le c^{\log_2\mu}$$
which holds for all sufficiently large $t$, $\mu$ and for a suitable constant $c$.

By \eqref{integral at infinity is infinite} and \eqref{Delta2 assumption}, we deduce that
\begin{equation}\label{integral at infinity of A}
\int\limits^\infty\bigg(\frac{t}{A(t)}\bigg)^\frac{1}{n-1}\de t=\infty.
\end{equation}
Indeed, by \eqref{ellipticity}, \eqref{growth}, and \eqref{domination of derivative},
\begin{equation}\label{B dominates A}
A(t)\le q B(t)+\phi(x) t\quad\text{for \Ae}x\in\Omega\ \text{and all}\ t\ge R.
\end{equation}
Since $B$ is convex, there exists a constant $c=c(B)>0$ such that $t\le c B(t)$ for large $t$. Hence, \eqref{B dominates A} implies
\begin{equation}\label{B dominates A for large t}
A(t)\le (q+\phi(x))B(t)\quad\text{for \Ae} x\in\Omega\ \text{and all large}\ t.
\end{equation}
Combining \eqref{B dominates A for large t} with \eqref{integral at infinity is infinite}, we obtain for \Ae $x\in\Omega$
$$\int\limits^\infty\bigg(\frac{t}{A(t)}\bigg)^\frac{1}{n-1}\de t\ge \frac{1}{(q+\phi(x))^\frac{1}{n-1}} \int\limits^\infty\bigg(\frac{t}{B(t)}\bigg)^\frac{1}{n-1}\de t=\infty,$$
so that \eqref{integral at infinity of A} follows. In particular, repeating the argument used to prove \eqref{lower index of B is smaller than n}, we conclude that $i_\infty(A)$ does not exceed $n$.

\begin{rmk}\label{A has a polynomial growth}
Let us mention that \eqref{estimate of B with power} ensures $A$ has a polynomial growth too. This follows easily once one observes that, integrating \eqref{B dominates A for large t} over any ball $\B\Subset\Omega$,
$$A(t)\le (q+(\phi)_{\B}) B(t)\quad\text{for all large}\ t,$$
where $(\phi)_{\B}$ denotes the integral average of $\phi$ on $\B$. Nevertheless, the $\Delta_2$-condition is not required on $A$.
\end{rmk}

Further, replacing, if necessary, the functions $A$ and $B$ by equivalent Young functions near infinity (see Lemma \ref{redefinition of functions and parameters}), we may assume without loss of generality that:

$(i)$ $A(t),B(t)>0$ for all $t>0$; this, together with \eqref{ellipticity} and the $\Delta_2$-condition on $B$, yields
$$0<A(t),B(t)<\infty\quad\text{for all}\ t>0,$$
whence $A$ and $B$ are locally Lipschitz continuous and strictly increasing on $[0,\infty)$.

$(ii)$ The integral
\begin{equation*}
\int\limits_0 \left(\frac{t}{A(t)}\right)^{\frac{1}{n-1}}\de t
\end{equation*}
is finite. Then, the optimal $n$-dimensional Sobolev conjugate of $A$ is given by \eqref{definition of d conjugate}.

$(iii)$ There exists $\varepsilon\in[0,i_\infty(A)-1]$ such that
\begin{equation}\label{lower growth on A by power}
t^{i_\infty(A)-\varepsilon}\le cA(t)\quad\text{for all}\ t\ge0
\end{equation}
for some $c=c(A,n)>0$. Set $p=i_\infty(A)-\varepsilon$, inequality \eqref{lower growth on A by power} immediately implies
\begin{equation}\label{sobolev conjugate dominates the conjugate of the lower exponent}
A_n(t)\gtrsim t^{\frac{np}{n-p}}\quad\text{globally}.
\end{equation}
Indeed, given $t\ge0$, one has
\begin{align*}
H_n(t)=\bigg(\int\limits_0^{t} \bigg(\frac{s}{A(s)}\bigg)^{\frac{1}{n-1}}\de s\bigg)^{\frac{n-1}{n}}\le c^{\frac{1}{n}}\frac{n}{n-p}t^{\frac{n-p}{n}}.
\end{align*}
It follows that
$$A_n(t)=A(H_n^{-1}(t))\ge A\Big(c^{-\frac{1}{n-p}}\Big(\frac{n-p}{n}\Big)^{\frac{n}{n-p}}t^{\frac{n}{n-p}}\Big)\ge c^{-\frac{p}{n-p}}\Big(\frac{n-p}{n}\Big)^{\frac{np}{n-p}}t^{\frac{np}{n-p}}\quad\text{for all}\ t\ge0,$$
which proves \eqref{sobolev conjugate dominates the conjugate of the lower exponent}.

Finally, the measurable functions $\psi$ and $\phi$ are assumed to satisfy proper integrability conditions. To prescribe the latter, we take advantage of the Matuszewska--Orlicz indices of $A$ and $B$ at infinity. Our assumptions are the following:
\begin{equation}\label{requirements on psi}
\psi\in L^s_{\mathrm{loc}}(\Omega) \ \ \text{with}\ \ s>\frac{n}{i_\infty(A)}
\end{equation}
and
\begin{equation}\label{requirements on phi}
\text{there is}\ \lambda>0\ \text{such that}\ A\circ b^{-1}(\lambda\phi)\in L_{\mathrm{loc}}^{r}(\Omega),\ \ \text{with}\ \ \begin{cases}
\displaystyle r>\frac{n}{i_\infty(A)}\frac{I_\infty(B)-1}{i_\infty(B)-1} &\mbox{if}\ i_\infty(B)>1\\
r=\infty &\mbox{if}\ i_\infty(B)=1
\end{cases}.
\end{equation}
By the properties of the indices at infinity, any modification of $A$ and $B$ near zero does not affect the assumptions on $\psi$ and $\phi$. This will be exploited in Lemma \ref{redefinition of functions and parameters}.

From \eqref{requirements on psi} and \eqref{requirements on phi}, the functions $\psi$ and $\phi$ belong to the local Orlicz spaces built upon the Young functions $\reallybigtilde{A_n}$ and $\widetilde{B}$, respectively. To see this, note that given any sufficiently smallTo see this, note that given any sufficiently small $\sigma\ge0$ \eqref{requirements on psi} implies
$$s>\frac{n}{i_\infty(A)-\sigma}>\Big(\frac{n(i_\infty(A)-\sigma)}{n-(i_\infty(A)-\sigma)}\Big)'.$$
Choosing $\sigma<\varepsilon$, where $\varepsilon$ is as in \eqref{lower growth on A by power}, \eqref{sobolev conjugate dominates the conjugate of the lower exponent} ensures that $t^s\gtrsim\reallybigtilde{A_n}(t)$ globally, which shows that
\begin{equation}\label{psi in the dual}
	\psi\in L^{\reallybigtilde{A_n}}_\mathrm{loc}(\Omega).
\end{equation}
Concerning $\phi$, if $i_\infty(B)=1$, \eqref{requirements on phi} directly gives $\phi\in L^\infty_\mathrm{loc}(\Omega)$, so the claim is trivial. When $i_\infty(B)>1$, \eqref{requirements on phi} implies that $\phi^{\frac{n}{i_\infty(B)-1}}$ belongs to $L^{\alpha}_{\mathrm{loc}}(\Omega)$ for some $\alpha>1$. Hence, by \eqref{lower index of B is smaller than n}, the same holds for $\phi^{\frac{i_\infty(B)}{i_\infty(B)-1}}$. Since \eqref{B dominates small powers} means that $\widetilde{B}$ grows slower than $t^{\beta'}$ at infinity for all $1\le\beta<i_\infty(B)$, it follows that
\begin{equation}\label{phi in the dual}
\phi\in L^{\widetilde{B}}_\mathrm{loc}(\Omega).
\end{equation}
\subsection{Definition of weak solution} Under the assumptions above, we can now define the notion of solution to Equation \eqref{equation}. As mentioned in the introduction, we consider solutions $u$ belonging to $W^1L^B_\mathrm{loc}(\Omega)$ and satisfying the equation in a (local) weak sense. Before giving a precise definition, we observe that, by \eqref{phi in the dual}, for every $u\in L^B_{\mathrm{loc}}(\Omega)$ and every $\Omega'\Subset\Omega$ it holds
$$\int\limits_{\Omega'} A(|u|)\dex\le \int\limits_{\Omega'} q B(|u|)+\phi|u|\dex<\infty.$$
Hence, any $u\in W^1L^B_\mathrm{loc}(\Omega)$ belongs to $W^1K^A_{\mathrm{loc}}(\Omega)$ and, by Theorem \ref{Orlicz-Sobolev embedding}, also to $L^{A_n}_\mathrm{loc}(\Omega)$.

Let $u\in W^1L^B_{\mathrm{loc}}(\Omega)$ and $ v \in W^1L^B(\Omega)$, and set $\Omega'\Def\supp\, v \Subset\Omega$. Using \eqref{growth}, the Cauchy-Schwarz and Young inequalities, and \eqref{inequality between A conjugate and A}, we obtain
\begin{align*}
\int\limits_\Omega|\A(x,\nabla u)\cdot \nabla  v |\dex
&\le \int\limits_{\Omega'}\widetilde{B}({b(|\nabla u|)})+ B(|\nabla v|)+ \phi |\nabla v|\dex\\
&\le \int\limits_{\Omega'\cap\{|\nabla u|<t_1\}}\widetilde{B}({b(t_1)})\dex+\int\limits_{\Omega'\cap\{|\nabla u|\ge t_1\}} |\nabla u|b(|\nabla u|)\dex+\int\limits_{\Omega'} B(|\nabla v|)+ \phi |\nabla v|\dex\\
&\le \widetilde{B}({b(t_1)})|\Omega'|+q\int\limits_{\Omega'}B(|\nabla u|)\dex+\int\limits_{\Omega'} B(|\nabla v|)+ \phi |\nabla v|\dex<\infty.
\end{align*}
The fact that
$\int\limits_{\Omega'} \phi|\nabla v|\dex$ is finite follows from \eqref{phi in the dual}.\\
Concerning the reaction term, by \eqref{growth on f} one has
$$\int\limits_\Omega |f(x,u)v|\dex\le \int\limits_{\Omega'} |\psi v|+\frac{E(|u|)}{|u|}|v| \dex.$$
Then, using \eqref{domination on E} together with \eqref{inequalities for A and a} and \eqref{inequality between A conjugate and A}, we get
\begin{align*}
\int\limits_\Omega |f(x,u)v|\dex&\le \int\limits_{\Omega'} |\psi v|\dex+\frac{E(t_0)}{t_0}\int\limits_{\Omega'}|v|\dex+\int\limits_{\Omega'} \frac{A_n(c|u|)}{|u|}|v|\dex\\
&\le\int\limits_{\Omega'} |\psi v | + \frac{E(t_0)}{t_0}|v|+A_n(c|u|)+A_n(c|v |)\dex<\infty,
\end{align*}
where to ensure the finiteness of the last integral one exploits \eqref{psi in the dual}.

In summary, for any $u\in W^1L^B_{\mathrm{loc}}(\Omega)$ and $ v \in W^1L^B(\Omega)$ with $\supp\, v \Subset\Omega$, both $\A(\cdot,\nabla u)\cdot \nabla  v $ and $f(\cdot,u) v $ belong to $L^1(\Omega)$. Therefore, the following definition of weak solution is well posed.
\begin{deff}\label{definition weak solution}
A function $u\in W^1L^B_{\mathrm{loc}}(\Omega)$ is a weak solution to \eqref{equation} if
\begin{equation}\label{weak formulation}
\int\limits_\Omega \A(x,\nabla u)\cdot \nabla  v \dex=\int\limits_\Omega f(x,u) v \dex
\end{equation}
for all $ v \in W^1L^B(\Omega)$ such that $\supp\, v \Subset\Omega$.
\end{deff}
\subsection{Statement of the main result}
Due to the structure of the problem, we adjust our assumptions according to the growth rate of $B$. If $B$ has linear growth (see Section \ref{Orlicz-Sobolev spaces}), no additional assumption is needed.

Suppose instead that $B$ exhibits super-linear growth. In this setting, we require that
\begin{gather}\label{assumption 2}
\reallybigtilde{A\circ b^{-1}}\ \text{satisfies the}\ \Delta_2\text{-condition near infinity}.
\end{gather}
If
\begin{equation}\label{first case main thm}
n\ge3\quad\text{and}\quad\int\limits^\infty\left(\frac{t}{A(t)}\right)^\frac{1}{n-2}\de t=\infty,
\end{equation}
then we impose the balance condition
\begin{equation}\label{main condition}
b\lesssim {\reallybigtilde{A_{n-1}}}^{-1}\circ A \quad\text{near infinity},
\end{equation}
where ${\reallybigtilde{A_{n-1}}}^{-1}$ denotes the generalized right-continuous inverse of $\reallybigtilde{A_{n-1}}$ (see \eqref{generalized right continuous inverse}).\\
On the other hand, in the complementary regime, that is, if
\begin{equation}\label{second case main thm}
\begin{cases}
n=2,\\
n\ge3\quad\text{and}\quad\displaystyle\int\limits^\infty\left(\frac{t}{A(t)}\right)^\frac{1}{n-2}\de t<\infty,
\end{cases}
\end{equation}
no further assumption is required.

Our main result is the following:
\begin{thm}\label{boundedness theorem} Let $A$, $B$ and $E$ be Young functions. Let $u\in W^{1}L^B_\mathrm{loc}(\Omega)$ be a weak solution to Equation \ref{equation}, satisfying the ellipticity and growth conditions \eqref{ellipticity}, \eqref{growth}, and \eqref{growth on f}. Assume that $B$ satisfies \eqref{integral at infinity is infinite} and \eqref{Delta2 assumption}, that $E$ satisfies \eqref{Delta2 on E} and \eqref{domination on E}, and that $\psi$ and $\phi$ satisfy \eqref{requirements on psi} and \eqref{requirements on phi}, respectively.
If $B$ has super-linear growth, assume additionally that \eqref{assumption 2} is satisfied and that either \eqref{second case main thm} is in force, or \eqref{first case main thm} is in force and \eqref{main condition} is satisfied.

Then, $u$ is locally bounded in $\Omega$.
\end{thm}
Some comments on the above statement are in order.
\begin{rmk}\label{Remark post thm}
$(a)$ The balance condition \eqref{main condition} and the dominance inequality \eqref{B dominates A for large t} are not contradictory. More precisely, the map $t\mapsto t {\reallybigtilde{A_{n-1}}}^{-1}(A (t))$ dominates $A$ near infinity, and hence may dominate $b$ as well. Indeed, since
$$ A\lesssim A_{n-1}\quad \text{near infinity},$$
by exploiting \eqref{inequality between inverses of A conjugate and A} we obtain
$${\reallybigtilde{A_{n-1}}}^{-1}\circ A (t)\gtrsim {\widetilde{A}}^{-1}\circ A (t)\approx \frac{A(t)}{t}\quad\text{near infinity}.$$
Moreover, since $A \not\approx A_{n-1}$ near infinity, it follows that
$${\reallybigtilde{A_{n-1}}}^{-1}\circ A (t)\gtrsim q\frac{A(t)}{t}\quad\text{near infinity}$$
for some $q>0$.

$(b)$ Let us compare Theorem \ref{boundedness theorem} with the known results on the local boundedness of local minimizers of variational integrals with unbalanced Orlicz growth obtained in \cite{CianchiSchaffner}. From \eqref{main condition} and the asymptotic relation $A\lesssim A_{n-1}$ near infinity, one has
\begin{equation}\label{to prove the variational}
\frac{B(t)}{t}\lesssim {\reallybigtilde{A_{n-1}}}^{-1}\circ A\lesssim {\reallybigtilde{A_{n-1}}}^{-1}\circ A_{n-1}\quad\text{near infinity}.
\end{equation}
Since, by \eqref{inequality between inverses of A conjugate and A},
$$t{\reallybigtilde{A_{n-1}}}^{-1}(A_{n-1}(t))\le 2A_{n-1}(t)\quad\text{for all}\ t\ge0,$$
\eqref{to prove the variational} implies
\begin{equation*}
B\lesssim A_{n-1}\quad\text{near infinity}.
\end{equation*}
Assuming \eqref{first case main thm} holds, this condition is precisely the one required in \cite{CianchiSchaffner} to prove the variational counterpart of Theorem \ref{boundedness theorem}. Consequently, our result is consistent with the variational framework.

$(c)$ If $B$ has linear-growth and \eqref{first case main thm} holds, then \eqref{main condition} is satisfied. Indeed, in this regime $A$ is equivalent to a linear function near infinity, and therefore
\begin{equation*}
{\reallybigtilde{A_{n-1}}}^{-1}\circ A\approx {\reallybigtilde{A_{n-1}}}^{-1}\ \text{near infinity}\ \text{and}\ b\ \text{is bounded}.
\end{equation*}
Moreover, by \eqref{lower growth on A by power}, $cA(t)\ge t$ for all $t\ge0$, thus the same argument used to prove \eqref{sobolev conjugate dominates the conjugate of the lower exponent} shows that
$$A_{n-1}(t)\gtrsim t^{\frac{n-1}{n-2}}\quad\text{globally}.$$
The condition in \eqref{main condition} then follows.

$(d)$ Thanks to the $\Delta_2$-condition on $B$, \eqref{main condition} is equivalent to
\begin{equation*}
	\alpha\frac{B(t)}{t}\lesssim {\reallybigtilde{A_{n-1}}}^{-1}\circ A\quad\text{near infinity}, 
\end{equation*}
for some $\alpha>0$. For later use, it is convenient to rewrite this as
\begin{equation*}
	\reallybigtilde{A_{n-1}}\lesssim A\circ b^{-1}\quad \text{near infinity},
\end{equation*}
which, roughly speaking, means that $A\circ b^{-1}$ does not grow too slowly in comparison with $A$.
\end{rmk}
Our approach to Theorem \ref{boundedness theorem} follows the lines of De Giorgi’s regularity result for linear equations with measurable coefficients. Inspired by \cite{CianchiSchaffner}, and as in any existing proof of local boundedness based on De Giorgi's technique, we derive a Caccioppoli-type inequality, i.e., an energy estimate on the super-level sets of the relevant solution $u$. For elliptic equations with natural growth as in \eqref{p growth}, a prototype of this inequality reads as:
\begin{equation}\label{Caccioppoli natural}
\int\limits_{\B_\rho(y)\cap\{u>k\}}|\nabla  (u-k)_+|^p\dex\lesssim_{n,p} \frac{1}{(\sigma-\rho)^p}\int\limits_{\B_\sigma(y)\cap\{u>k\}} (u-k)_+ ^p\dex+|\B_\sigma(y)\cap\{u>k\}|^\alpha
\end{equation}
for all concentric balls $\B_\rho(y)\Subset\B_\sigma(y)\Subset\Omega$. Taking any cut-off function $\eta$ associated with two such balls, and choosing
\begin{equation}\label{usual test function}
v= \eta (u-k)_+
\end{equation}
as a test function in the weak formulation of the problem, \eqref{Caccioppoli natural} follows from an absorption argument for the gradient terms, which relies on the growth conditions and Young's inequality.

When dealing with unbalanced Orlicz growths, further difficulties arise, due to both the different lower and upper bounds on $\A$ and the lack of homogeneity of non-power-type Young functions. Indeed, reproducing the argument outlined above generates unwanted integral terms on the right-hand side of the inequality. To overcome this difficulty, one needs a balance condition between $A$ and $B$. This requirement allows one to perform an absorption argument similar to the $p$-growth setting and allows for the application of an $n$-dimensional Sobolev-type inequality in a sharp integral form on the remaining terms.

We rely on the recent approach developed in \cite{CianchiSchaffner} for variational integrals with unbalanced Orlicz growths, which, in turn, refines the trial function optimization argument introduced in the $p,q$-growth framework in \cite{BellaSchaffner2020,HirschSchaffner}. This technique employs Sobolev inequalities only on $(n-1)$-dimensional spheres, rather than on $n$-dimensional balls. This allows us to formulate the balance inequality \eqref{main condition} in terms of the $(n-1)$-dimensional Sobolev conjugate of $A$ alone, without relying on the $n$-dimensional one, thereby yielding a sharper inequality. The strategy is based on Lemma \ref{lemma construction of cut off} below, which provides an optimized cut-off function in \eqref{usual test function}. Sobolev and Sobolev-Poincaré type inequalities in dimension $n$ come into play only when estimating terms depending explicitly on $u$. Since Sobolev conjugates are generally non-homogeneous, the corresponding inequalities necessarily involve a gradient term on both sides; hence, integrals involving both $u$ and $\nabla u$ appear on the right-hand side of the inequality.

The linear-growth case is handled separately. Indeed, as noted in part $(c)$ of Remark \ref{Remark post thm}, when $B$ has linear growth, $A\circ b^{-1}$ is too slow compared with $A_{n-1}$. On the other hand, since $b$ is bounded, no problematic integral terms appear on the right-hand side of the inequality, so an absorption argument is unnecessary. This makes the proof relatively short and elementary.

In order to better describe the content of Theorem \ref{boundedness theorem}, at the conclusion of this section we analyze some special instances of the relevant Young functions $A$ and $B$. We consider equations whose ellipticity is governed either by powers or by ``log-bumps'', namely Young functions of the form ``power-times-logarithm''. In the former case, we not only recover the available results but also improve and extend them in certain respects. The latter application provides an example where the ellipticity is associated with non-homogeneous Young functions.

\begin{xmp}\label{example powers}
First, we consider the standard case
$$A(t)=t^p\ \text{and}\ B\ \text{super-linear},$$
with $1\le p\le n$. Then, the assumptions \eqref{requirements on psi} and \eqref{requirements on phi} simply become
$$\psi\in L^s_\mathrm{loc}(\Omega)\ \ \text{with}\ \ s>\frac{n}{p}$$
and
$$ b^{-1}(\lambda\phi)\in L^\alpha_\mathrm{loc}(\Omega)\ \ \text{with}\ \ \begin{cases}
\displaystyle \alpha>n\frac{I_\infty(B)-1}{i_\infty(B)-1} &\mbox{if}\ i_\infty(B)>1,\\
\alpha=\infty &\mbox{if}\ i_\infty(B)=1.
\end{cases}$$
Moreover, for $1\le p<n$, the assumption \eqref{domination on E} is equivalent to
$$E(t)\lesssim t^{\frac{np}{n-p}}\quad\text{near infinity};$$
on the other hand, $A_n(t)\approx \ee^{t^{n'}}$ near infinity if $p=n$, hence \eqref{domination on E} is verified by any $E$ that satisfies the $\Delta_2$-condition near infinity.

Suppose $n\ge3$. If $1\le p<n-1$, then $A_{n-1}(t)\approx t^{\frac{(n-1)p}{n-1-p}}$ near infinity, and the assumption \eqref{main condition} is equivalent to
$$B(t)\lesssim t^{\frac{n}{n-1}p}=A(t^{n'})\quad\text{near infinity}.$$
In particular, when $B(t)=t^q$, with $1\le q \le n$, we see that Theorem \ref{boundedness theorem} recovers a result of \cite{CupiniMarcelliniMascolo23}. 
If $p=n-1$, then $A_{n-1}(t)\approx \ee^{t^{(n-1)'}}$ near infinity, hence ${\reallybigtilde{A_{n-1}}}^{-1}(t)\approx t(\log t)^{-\frac{1}{(n-1)'}}$ near infinity. Therefore, in this case \eqref{main condition} becomes
$$B(t)\lesssim t^n (\log t)^{-\frac{1}{(n-1)'}}\quad\text{near infinity}.$$
Thus, if Equation \eqref{equation} is $(n-1)$-elliptic, then in order for Theorem \ref{boundedness theorem} to imply the local boundedness of weak solutions, $L^n(\log L)^{-\frac{1}{(n-1)'}}$ is the smallest Orlicz space that can prescribe the upper growth condition on $\A$.

On the other hand, if either $n=2$ or $n\ge3$ and $p> n-1$, the second alternative condition in \eqref{second case main thm} is satisfied, hence no further condition on $B$ has to be imposed.
\end{xmp}
\begin{xmp}
We now turn our attention to Zygmund-type growths. Assume that $B$ has super-linear growth, and that
$$A(t)\approx t^p(\log t)^\alpha\quad\text{near infinity},$$
where either $1<p<n$ and $\alpha\in\R$, or $p=1$ and $\alpha\ge0$, or $p=n$ and $\alpha\le n-1$. These restrictions on the exponents $p$ and $\alpha$ (especially the last one) are required for $A$ to be a Young function fulfilling \eqref{integral at infinity of A}.

Under the above assumptions on $A$, its optimal $n$-dimensional Sobolev conjugate is equivalent near infinity to
$$\begin{cases}
t^\frac{np}{n-p}(\log t)^{\frac{n\alpha}{n-p}} &\mbox{if}\ 1\le p<n\\
\ee^{t^{\frac{n}{n-1-\alpha}}} &\mbox{if}\ p=n\ \text{and}\ \alpha<n-1\\
\ee^{\ee^{t^{n'}}} &\mbox{if}\ p=n\ \text{and}\ \alpha=n-1.
\end{cases}$$
Analogous considerations can be made for the $(n-1)$-dimensional Sobolev conjugate.

Since $i_\infty(A)=p$, the assumptions \eqref{requirements on psi} and \eqref{requirements on phi} rewrite as in Example \ref{example powers}. On the other hand, if $p<n$, \eqref{domination on E} becomes
$$E(t)\lesssim t^\frac{np}{n-p}(\log t)^{\frac{n\alpha}{n-p}}\quad \text{near infinity}.$$
If $p=n$, \eqref{domination on E} is verified by any Young function satisfying the $\Delta_2$-condition near infinity.

Note that \eqref{first case main thm} is equivalent to
\begin{equation}\label{first case log bumps}
n\ge3\quad\text{and}\quad\begin{cases}
	p<n-1,\\
	p=n-1\ \ \text{and}\ \ \alpha\le n-2.
\end{cases}
\end{equation}
Then, by Theorem \eqref{boundedness theorem}, every weak solution to \eqref{equation} is locally bounded provided that
$$\begin{cases}
n\ge2,\\
n\ge3 \ \ \text{and}\ \ p>n-1,\\
n\ge 3,\ \ p=n-1\ \ \text{and}\ \ \alpha> n-2.
\end{cases}$$

Assume that \eqref{first case log bumps} is in force. In this case, we need to impose the balance condition \eqref{main condition}. If $p<n-1$, one has
$$A_{n-1}(t)\approx t^\frac{(n-1)p}{n-1-p}(\log t)^{\frac{(n-1)\alpha}{n-1-p}}\quad\text{near infinity},$$
hence, exploiting \eqref{inequality between inverses of A conjugate and A},
$${\reallybigtilde{A_{n-1}}}^{-1}(t)\approx t^{\frac{np-n+1}{(n-1)p}}(\log t)^{\frac{\alpha}{p}}\quad\text{near infinity}.$$
Then, \eqref{main condition} rewrites as
\begin{equation*}
B(t)\lesssim t^{n'p}(\log t)^{n'\alpha}\quad \text{near infinity}.
\end{equation*}
If $p=n-1$ and $\alpha\le n-2$, $A_{n-1}(t)$ is equivalent near infinity to
$$\begin{cases}
\ee^{t^{\frac{n-1}{n-2-\alpha}}} &\mbox{if}\ \alpha<n-2\\
\ee^{\ee^{t^{(n-1)'}}} &\mbox{if}\ \alpha=n-2.
\end{cases}$$
Therefore,
$${\reallybigtilde{A_{n-1}}}^{-1}(t)\approx\begin{cases}
t(\log t)^{-\frac{n-2-\alpha}{n-1}} &\mbox{if}\ \alpha<n-2\\
t(\log\log t)^{-\frac{1}{(n-1)'}} &\mbox{if}\ \alpha=n-2
\end{cases}\quad\text{near infinity},$$
hence \eqref{main condition} becomes
$$B(t)\lesssim\begin{cases}
t^{n}(\log t)^{\alpha-\frac{n-2-\alpha}{n-1}} &\mbox{if}\ \alpha<n-2\\
t^n(\log t)^{\alpha}(\log\log t)^{-\frac{1}{(n-1)'}} &\mbox{if}\ \alpha=n-2
\end{cases}\quad\text{near infinity}.$$
\end{xmp}

\section{Technical lemmas}\label{Technical lemmas}
All the conditions stated in Section \ref{main results} are invariant under the replacement of $A$, $B$, and $E$ with equivalent Young functions near infinity. Indeed, in these conditions, only the behavior of the relevant Young functions --- or of their left derivatives --- \emph{for large arguments} matters. We aim to show that it is possible to redefine $A$, $B$, and $E$ near zero, and correspondingly modify the parameters $R$, $\lambda$, $\phi$, and $\psi$, in such a way that all the assumptions of Theorem \ref{boundedness theorem} hold globally, not just near infinity. The result is formulated in the following lemma:
\begin{lemma}\label{redefinition of functions and parameters}
Let the assumptions of Theorem \ref{boundedness theorem} be satisfied. Then the following statements hold:\\
$\boldsymbol{(i)}$ \emph{\textbf{Existence of modified Young functions and measurable functions.}} There exist Young functions $\widehat{A}$, $\widehat{B}$, $\widehat{E}$, and non-negative measurable functions $\widehat{\phi}$, $\widehat{\psi}$, such that:\\
$(1)$ $\widehat{A}=A$, $\widehat{B}=B$, and $\widehat{E}=E$ near infinity.\\
$(2)$ The exponents $s$ and $r$ satisfy
\begin{equation}
s>\frac{n}{i_\infty(\widehat{A})}\quad\text{and}\quad\begin{cases}
	\displaystyle r>\frac{n}{i_\infty(\widehat{A})}\frac{I_\infty(\widehat{B})-1}{i_\infty(\widehat{B})-1} &\mbox{if}\ i_\infty(\widehat{B})>1,\\
	r=\infty &\mbox{if}\ i_\infty(\widehat{B})=1.
\end{cases}\label{summability inequality 0 redefinition}
\end{equation}
$(3)$ $\widehat{A}(t), \widehat{B}(t)>0$ for all $t>0$, and
\begin{equation}
\int\limits_0 \bigg(\frac{t}{\widehat{A}(t)}\bigg)^{\frac{1}{n-1}}\de t<\infty.\label{finiteness integral of A redefinition}
\end{equation}
$(4)$ For \Ae $x\in\Omega$, the maps $\A(x,\cdot)$ and $f(x,\cdot)$ are controlled by $\widehat{A},\widehat{B},\widehat{E},\widehat{\psi},\widehat{\phi}$ in the following sense:
\begin{gather}
|\A(x,\xi)|\le \widehat{b}(|\xi|)+\widehat{\phi}(x)\quad\text{for all}\ \xi\in\R^n,\nonumber\\
\A(x,\xi)\cdot\xi\ge \widehat{A}(|\xi|)\quad\text{for all}\ \xi\in\R^n,\ |\xi|\ge\widehat{R},\nonumber\\
|f(x,z)|\le \widehat{\psi}(x)+\frac{\widehat{E}(|z|)}{|z|} \quad\text{for all}\ z\in\R.\nonumber
\end{gather}
$\boldsymbol{(ii)}$ \emph{\textbf{Choice of constants and growth conditions.}} There exist constants $\widehat{L},\widehat{R}\ge1$, $\widehat{\lambda}>0$, and exponents $p\ge1$ and $m>pn'$, such that:\\
$(1)$ the exponent $p$ satisfies
\begin{equation}
\begin{cases}
	1<p<i_\infty(\widehat{A}) &\mbox{if}\ i_\infty(\widehat{A})>1\\
	p=1 &\mbox{if}\ i_\infty(\widehat{A})=1
\end{cases}\qquad\text{and}\qquad s,r>\frac{n}{p}.\label{summability inequality redefinition}
\end{equation}
$(2)$ The functions $\widehat{\psi}$ and $\widehat{\phi}$ satisfy
\begin{equation}
\widehat{\psi}\in L^s_{\mathrm{loc}}(\Omega),\quad \widehat{A}\circ\widehat{b}^{-1}(\widehat{\lambda}\widehat{\phi})\in L^r_{\mathrm{loc}}(\Omega).\label{requirements on phi redefinition}
\end{equation}
$(3)$ The Young functions satisfy the following growth conditions for all $t\ge0$:
\begin{gather}
t^p\le \widehat{L}\widehat{A}(t),\label{A hat dominates curve globally}\\
\widehat{B}(2t)\le \widehat{L}\widehat{B}(t),\label{B Delta2 redefinition}\\
\widehat{E}(2t)\le\widehat{L}E(t)\quad\text{and}\quad \widehat{E}(t)\le{\widehat{A}_n(\widehat{L}t)}.\label{growth inequalities for E}
\end{gather}
$(4)$ Depending on the growth of $\widehat{B}$:
\begin{itemize}
\item If $\widehat{B}$ has linear growth, then its left-derivative is bounded by $\widehat{L}$:
\begin{equation}\label{b is bounded}
	\widehat{b}(t)\le\widehat{L}\quad \text{for all}\ t\in[0,\infty).
\end{equation}
\item If $\widehat{B}$ has super-linear growth, then $\reallybigtilde{\widehat{A}\circ\widehat{b}^{-1}}$ satisfies the $\Delta_2$-condition globally; equivalently,
\begin{equation}
	\reallybigtilde{\widehat{A}\circ\widehat{b}^{-1}}(\mu t)\le \mu^m\reallybigtilde{\widehat{A}\circ\widehat{b}^{-1}}(t)\ \ \text{for all}\ t\ge0\ \text{and all}\ \mu\ge1.\label{domination by power on Aob^-1}
\end{equation}
Further, if \eqref{first case main thm} and \eqref{main condition} are in effect,
\begin{equation}\label{main condition redefinition}
	\reallybigtilde{\widehat{A}\circ\widehat{b}^{-1}}(t)\le \widehat{A}_{n-1}(\widehat{L}t)\quad\text{for all}\ t\ge0,
\end{equation}
while if \eqref{second case main thm} is in effect,
\begin{equation}\label{second condition redefinition}
	\reallybigtilde{\widehat{A}\circ\widehat{b}^{-1}}(t)\le \widehat{L} t^m\quad\text{for all}\ t\ge0.
\end{equation}
\end{itemize}
\end{lemma}
Before presenting the proof, we state a useful lemma illustrating how to construct super-linear Young functions with prescribed behavior near zero and at infinity. The proof will make use of a classical result from measure theory; for details, we refer the reader to Theorem 1.16 in \cite{folland}:
\begin{prop}\label{prop derivative of increasing functions}
Let $r>0$. If $\varphi\colon(r,\infty)\to\R$ is increasing and right-continuous, then there exists a positive Borel measure $\mu$ on $(r,\infty)$ such that
$$\mu((a,b])=\varphi(b)-\varphi(a)\quad\text{for all}\ r<a<b<\infty.$$
\end{prop}
Given Young functions $\Phi,\Psi\colon[0,\infty)\to[0,\infty)$ and numbers $\tau_1>\tau_0>0$, we define $\mathcal{J}_{\tau_0,\tau_1}[\Phi,\Psi]\colon[0,\infty)\to[0,\infty)$ by
$$\J_{\tau_0,\tau_1}[\Phi,\Psi](t)\Def\begin{cases}
\Phi(t) &\mbox{if}\ 0\le t< \tau_0\\
\displaystyle\frac{\tau_1-t}{\tau_1-\tau_0}\Phi(\tau_0)+\frac{t-\tau_0}{\tau_1-\tau_0}\Psi(\tau_1) &\mbox{if}\ \tau_0\le t< \tau_1\\
\Psi(t) &\mbox{if}\ t\ge \tau_1
\end{cases}\qquad\text{for}\ t\ge0.$$
In other words, we interpolate linearly between $\Phi$ and $\Psi$ on $[\tau_0,\tau_1]$.
\begin{lemma}\label{lemma joining convex functions}
Let $\Phi,\Psi\colon[0,\infty)\to[0,\infty)$ be Young functions, and assume that $\Psi$ has super-linear growth. Then, for every $\tau_0>0$ there exists $T=T(\tau_0)\ge\tau_0$ such that, for any $\tau_1> T$, $\J_{\tau_0,\tau_1}[\Phi,\Psi]$ is a Young function.
\end{lemma}
\begin{proof}
Fix $\tau_0>0$. For each $\tau_1>\tau_0$$\tau_1>\tau_0$, the function $\J_{\tau_0,\tau_1}[\Phi,\Psi]$ is continuous by definition. To determine a suitable $T$, we compare the slope of the line joining $(\tau_0,\Phi(\tau_0))$ and $(t,\Psi(t))$ with the slopes of $\Phi$ and $\Psi$ at $\tau_0$ and $t$, respectively. Our aim is to show that, for all sufficiently large $t$, this interpolating slope lies between the corresponding one-sided derivatives of $\Phi$ and $\Psi$; that is,
\begin{equation*}
\Phi'_-(\tau_0)\le \frac{\Psi(t)-\Phi(\tau_0)}{t-\tau_0}\le \Psi'_+(t).
\end{equation*}
This is equivalent to
\begin{equation}\label{claim in convex junction lemma}
\Phi'_-(\tau_0)\le \frac{\Psi(t)-\Psi(\tau_0)}{t-\tau_0}+\frac{\Psi(\tau_0)-\Phi(\tau_0)}{t-\tau_0}\le \Psi'_+(t).
\end{equation}
Since $\Psi$ is super-linear, its slopes diverge, namely
$$\lim_{t\to\infty}\frac{\Psi(t)-\Psi(\tau)}{t-\tau}=\infty\qquad\text{for all}\ \tau>0.$$
Thus, the left-hand inequality in \eqref{claim in convex junction lemma} holds for all $t$ large enough.

To prove the right-hand inequality in \eqref{claim in convex junction lemma}, it suffices to show that
\begin{equation}\label{eta diverges}
\eta(t)\Def(t-\tau_0)\Psi'_+(t)-(\Psi(t)-\Psi(\tau_0))\to\infty\quad\text{as}\ t\to\infty.
\end{equation}
Observe first that
$$\eta(t)=\int\limits_{\tau_0}^t\Psi'_+(t)-\Psi'_+(s)\de s.$$
Let $t>\tau_0$. Since $\Psi'_+\colon(\tau_0,\infty)\to\R$ is increasing and right-continuous, by Proposition \ref{prop derivative of increasing functions} there exists a positive Borel measure $\mu$ on $(\tau_0,\infty)$ such that
$$\Psi'_+(y)-\Psi'_+(s)=\mu((s,y])\quad\text{for}\ \tau_0<s<y<\infty.$$
Using this representation, we may rewrite $\eta(t)$ as
\begin{align*}
\eta(t)=\int\limits_{\tau_0}^t\bigg(\int\limits_{(\tau_0,t]}\chi_{(s,t]}(y)\de \mu(y)\bigg)\de s= \int\limits_{(\tau_0,t]}\bigg(\int\limits_{\tau_0}^t\chi_{(s,t]}(y)\de s\bigg)\de \mu(y)=\int\limits_{(\tau_0,t]}y-\tau_0\de \mu(y),
\end{align*}
where the second equality follows from Fubini's theorem. Let $0<\varepsilon<t-\tau_0$. Since $y-\tau_0\ge\varepsilon$ for all $y\in(\tau_0+\varepsilon,t]$, we obtain the lower bound
$$\eta(t)\ge\int\limits_{(\tau_0+\varepsilon,t]}y-\tau_0\de \mu(y)\ge \varepsilon\mu((\tau_0+\varepsilon,t])=\varepsilon(\Psi'_+(t)-\Psi'_+(\tau_0+\varepsilon)).$$
In summary, we have shown that
$$\eta(t)\ge \varepsilon(\Psi'_+(t)-\Psi'_+(\tau_0+\varepsilon))\quad\text{for all}\ t>\tau_0.$$
Therefore, \eqref{eta diverges} follows from the super-linearity of $\Psi$. This yields the desired right-hand inequality in \eqref{claim in convex junction lemma} for all sufficiently large $t$, concluding the proof.
\end{proof}
Now we are ready to prove Lemma \ref{redefinition of functions and parameters}.
\begin{proof}[Proof of Lemma \ref{redefinition of functions and parameters}]
Recall from Section \ref{Orlicz-Sobolev spaces} that equivalence of Young functions near infinity implies equality of their Matuszewska--Orlicz indices at infinity. Hence, if one considers any Young functions $\widehat{A}$ and $\widehat{B}$ that are equivalent near infinity to $A$ and $B$, respectively, then the assumptions \eqref{requirements on psi} and \eqref{requirements on phi} are sufficient to ensure that \eqref{summability inequality 0 redefinition} holds. Furthermore, there exists an exponent $p\ge1$, depending only on $A$, for which \eqref{summability inequality redefinition} holds; importantly, this $p$ does not rely on the specific choice of $\widehat{A}$.\\
Given $p\ge1$ such that \eqref{summability inequality redefinition} is satisfied, there exists $c=c(A)>0$ and $t_2\ge0$ such that
\begin{equation}\label{definition of t3}
cA(t)\ge t^p\quad\text{for  all}\ t\ge t_2.
\end{equation}
If $B$ is super-linear and \eqref{main condition} is satisfied, let $L>0$ and $t_3\ge0$ be such that
\begin{equation}\label{definition of L}
b(t)\le {\reallybigtilde{A_{n-1}}}^{-1}(A(Lt))\quad \text{for all}\ t\ge t_3.
\end{equation}
Pick $t_4\ge0$ such that $A(t),B(t)>0$ for $t> t_4$. Let $t_5$ be the lower bound for $t$ in the definition of the $\Delta_2$-condition near infinity for the functions $E$ and $\reallybigtilde{A\circ b^{-1}}$. Set
$$\tau\Def\max\{R,t_0,t_1,t_2,t_3,t_4,t_5,1\},$$
where $R$, $t_0$, and $t_1$ are the constants appearing in \eqref{ellipticity}, \eqref{domination on E}, and \eqref{estimate of B with power}, respectively.

\emph{Step $1$: construction of $\widehat{A}$.} We distinguish between linear and super-linear behavior. If $A$ is super-linear, by Lemma \ref{lemma joining convex functions} there exists $R_1>\tau$ such that $\J_{\tau,R_1}[t^p,A(\cdot)]$ is convex. Then, we define $\widehat{A}\colon[0,\infty)\to[0,\infty)$ by
$$\widehat{A}(t)\Def\begin{cases}
	t^p &\mbox{if}\ 0\le t< \tau\\
	\displaystyle\frac{{R_1}-t}{{R_1}-\tau}\tau^p+\frac{t-\tau}{{R_1}-\tau}A({R_1}) &\mbox{if}\ \tau\le t< R_1\\
	A(t) &\mbox{if}\ t\ge R_1.
\end{cases}$$
Clearly, $\widehat{A}(t)=0$ iff $t=0$, and $\widehat{A}=A$ near infinity. By \eqref{definition of t3}, $cA(t)\ge t^p$ for all $t\ge R_1$. Moreover,
$$\widehat{A}(t)\ge \frac{\tau^p}{R_1^p}t^p$$
for $t\in[\tau,R_1)$. Since $\widehat{A}(t)=t^p$ in $[0,\tau)$, the inequality in \eqref{A hat dominates curve globally} is satisfied, with $\widehat{L}\ge \max{\big\{c,1,\frac{R_1^p}{\tau^p}\big\}}$. Then, \eqref{finiteness integral of A redefinition} easily follows from $p<n$.

On the contrary, when $A(t)/t\to l<\infty$ as $t\to\infty$, one has $i_\infty(A)=1$, hence $p=1$ too. Then, we define $\widehat{A}\colon[0,\infty)\to[0,\infty)$ by
$$\widehat{A}(t)\Def\begin{cases}
	\displaystyle\frac{A(\tau)}{\tau}t &\mbox{if}\ 0\le t< \tau\\
	A(t) &\mbox{if}\ t\ge \tau.
\end{cases}$$
As in the previous case, $\widehat{A}$ satisfies both \eqref{finiteness integral of A redefinition} and \eqref{A hat dominates curve globally}.

For later use, let us emphasize that, being locally Lipschitz continuous and strictly increasing, $\widehat{A}$ is invertible. We also observe that, if \eqref{first case main thm} is satisfied, then there exist $c>0$ and $0<\tau_0\le\tau$ such that
\begin{equation}\label{d-1 conjugate near zero redefinition}
	\widehat{A}_{n-1}(t)= ct^{\frac{(n-1)p}{n-1-p}}\quad \text{for}\ t\in[0,\tau_0].
\end{equation}

\emph{Step $2$: construction of $\widehat{B}$, super-linear case.} Suppose that $B$ has super-linear growth. First, assume that \eqref{first case main thm} is in force and \eqref{main condition} is satisfied. Let
$${\reallybigtilde{\widehat{A}_{n-1}}}^{-1}$$be the generalized right-continuous inverse of $\reallybigtilde{\widehat{A}_{n-1}}$, and define $N\colon[0,\infty)\to[0,\infty)$ as
$$N(t)=\int\limits_0^t {\reallybigtilde{\widehat{A}_{n-1}}}^{-1}\circ\widehat{A}(s)\de s\quad\text{for all}\ t\ge0.$$
Since $\widehat{A}\approx A$ near infinity, the same happens for $\widehat{A}_{n-1}$ and $A_{n-1}$. Then, there exist $\tau_1>0$ and $L_1>0$ such that
$${\reallybigtilde{{A}_{n-1}}}^{-1}(A (t))\le{\reallybigtilde{\widehat{A}_{n-1}}}^{-1}(\widehat{A}(L_1t))\quad \text{for}\ t\ge \tau_1.$$
Reasoning as in the construction of $\widehat{A}$, there exists $R_2>\max\{R_1,\tau_1\}$ such that $\J_{\tau_0,R_2}[N,B]$ is a Young function, where $\tau_0$ is defined by \eqref{d-1 conjugate near zero redefinition}. Then, we let $\widehat{B}\colon[0,\infty)\to[0,\infty)$ be given by
$$\widehat{B}(t)\Def\begin{cases}
	N(t) &\mbox{if}\ 0\le t< \tau_0\\
	\displaystyle\frac{R_2-t}{R_2-\tau_0}N(\tau_0)+\frac{t-\tau_0}{R_2-\tau_0}B(R_2) &\mbox{if}\ \tau_0\le t< R_2\\
	B(t) &\mbox{if}\ t\ge R_2.
\end{cases}$$
Clearly, $\widehat{B}$ vanishes only at $0$ and coincides with $B$ near infinity. Let us show that the inequality in \eqref{main condition redefinition} holds for a sufficiently large $\widehat{L}$. For $t\in[0,\tau_0)\cup[R_2,\infty)$, by \eqref{main condition} and the definition of $N$, it suffices to choose $\widehat{L}\ge\max\{1,L\}$, where $L$ is the constant in \eqref{definition of L}. For $t\in[\tau_0,R_2)$, we have
$$\widehat{b}(t)=\frac{B(R_2)-N(\tau_0)}{R_2-\tau_0}\le b(R_2) \le {\reallybigtilde{{A}_{n-1}}}^{-1}({A}(R_2))\le {\reallybigtilde{\widehat{A}_{n-1}}}^{-1}\circ\widehat{A}(L_1R_2)\le {\reallybigtilde{\widehat{A}_{n-1}}}^{-1}\circ\widehat{A}((L_1R_2/\tau_0) t),$$
where the first inequality follows from the convexity of $\widehat{B}$, the second from the dominance condition in \eqref{main condition}, and the third from the definition of $\tau_1$ and the lower bound on $t$. Hence, the inequality in \eqref{main condition redefinition} is satisfied provided $\widehat{L}\ge\{1,L,L_1R_2/\tau_0\}$.\\
To prove that \eqref{domination by power on Aob^-1} holds, we claim that the Young function $G\Def\reallybigtilde{\widehat{A}\circ\widehat{b}^{-1}}$ satisfies the $\Delta_2$-condition globally. This is equivalent to showing that there exists a constant $c=c(A,b)>0$ such that
\begin{equation}\label{fraction Delta2 Aob^-1 redefinition}
	\frac{tg(t)}{G(t)}\le c\quad\text{for}\ t>0.
\end{equation}
Since $G$ and $g$ are increasing, it suffices to verify \eqref{fraction Delta2 Aob^-1 redefinition} for $t$ near infinity and near zero. On the one hand, near infinity we have $\widehat{A}=A$ and $\widehat{b}=b$, so that, by \eqref{assumption 2}, \eqref{fraction Delta2 Aob^-1 redefinition} clearly holds for large $t$, with a suitable choice of $c$. On the other hand, by \eqref{d-1 conjugate near zero redefinition}, for $t$ close to $0$ one has
$$G(t)=ct^{\frac{(n-1)p}{n-1-p}}$$
for some constant $c=c(A,n,p)$. Hence, when $t$ is small,
$$\frac{tg(t)}{G(t)}=\frac{(n-1)p}{n-1-p},$$
so that \eqref{fraction Delta2 Aob^-1 redefinition} is also satisfied near zero.\\
Let us show that \eqref{B Delta2 redefinition} holds. To this end, we need to prove that
\begin{equation}\label{fraction Delta2 B redefinition}
	\frac{t\widehat{b}(t)}{\widehat{B}(t)}\le c\quad\text{for}\ t>0.
\end{equation}
Since $\widehat{B}=B\in\Delta_2$ near infinity, \eqref{fraction Delta2 B redefinition} is automatically satisfied for large $t$. It thus remains to check the condition near zero. Using \eqref{d-1 conjugate near zero redefinition}, one finds
$$\quad\widehat{B}(t)=c_1t^{\frac{n}{n-1}p}\quad\text{for all}\ t\in[0,\tau_0]$$
for some $c_1>0$. Consequently,
$$\frac{t\widehat{b}(t)}{\widehat{B}(t)}=\frac{n}{n-1}p\quad\text{near zero},$$
which establishes \eqref{B Delta2 redefinition}.

Now, assume that \eqref{second case main thm} is in force. By \eqref{assumption 2}, $\reallybigtilde{A\circ b^{-1}}$ satisfies the $\Delta_2$-condition near infinity. Recalling the definition of $t_5$, there exist $m\ge1$ and $c>0$ such that
$$\reallybigtilde{A\circ b^{-1}}(t)\le ct^m\quad\text{for all}\ t\ge t_5.$$
Without loss of generality, we may assume that
$$m>pn'.$$
Set
$$Q(t)\Def\int\limits_0^t {\widehat{A}}\big(s^{1/m'}\big)\de s\quad\text{for all}\ t\ge0$$
and choose $R_2'> R_1$ so that $\J_{\tau,R'_2}[Q,B]$ is a Young function. Defining $\widehat{B}\colon[0,\infty)\to[0,\infty)$ by
$$\widehat{B}(t)\Def \J_{\tau,R'_2}[Q,B](t)\quad\text{for all}\ t\ge0,$$
one argues in the same way as in the previous case to conclude that \eqref{domination by power on Aob^-1}, \eqref{B Delta2 redefinition} and \eqref{second condition redefinition} also hold in the present setting.

\emph{Step $3$: construction of $\widehat{B}$, linear case.} We now consider the case where $B$ has linear growth. By Remark \ref{A has a polynomial growth}, $A$ has also linear growth. Then, we define $\widehat{B}\colon[0,\infty)\to[0,\infty)$ by
$$\widehat{B}(t)\Def\begin{cases}
	\displaystyle\frac{B(\tau)}{\tau}t &\mbox{if}\ 0\le t< \tau\\
	B(t) &\mbox{if}\ t\ge\tau.
\end{cases}$$
Since
$$\frac{t\widehat{b}(t)}{\widehat{B}(t)}=1\quad\text{for $t$ near zero},$$
$\widehat{B}$ satisfies the $\Delta_2$-condition globally, i.e., the inequality in \eqref{B Delta2 redefinition} is satisfied for all $t\ge0$ and for a sufficiently big constant $\widehat{L}$. The inequality in \eqref{b is bounded} follows easily.

\emph{Step $4$: construction of $\widehat{E}$.}
Let
$$R_3=\begin{cases}
	R_2 & \mbox{if}\ \text{\eqref{first case main thm} is in effect}\\
	R_2' & \mbox{if}\ \text{\eqref{second case main thm} is in effect}\\
	\tau & \mbox{if}\ B\ \text{has linear growth}.
\end{cases}$$
Since $\widehat{A}\approx A$ near infinity, we can choose $\tau_2,L_2>0$ such that
$$A_n(t)\le\widehat{A}_n(L_2t)\quad\text{for all}\ t\ge\tau_2.$$
Take $R_4>\max\{R_3,\tau_2\}$ such that $\J_{\tau,R_4}[\widehat{A}_n,E]$ is a Young function. We define $\widehat{E}\colon[0,\infty)\to[0,\infty)$ analogously to $\widehat{B}$, by setting
$$\widehat{E}(t)\Def \J_{\tau,R_4}[\widehat{A}_n,E](t)=\begin{cases}
\widehat{A}_n(t) &\mbox{if}\ 0\le t< \tau\\
\displaystyle\frac{R_4-t}{R_4-\tau}\widehat{A}_n(\tau)+\frac{t-\tau}{R_4-\tau}E(R_4) &\mbox{if}\ \tau\le t< R_4\\
E(t) &\mbox{if}\ t\ge R_4
\end{cases}\quad\text{for all}\ t\ge0.$$
Then, \eqref{growth inequalities for E} follows adapting the argument of Step $2$, exploiting the assumption \eqref{domination on E}, and taking $\widehat{L}$ big enough.

\emph{Step $5$: conclusion.} Finally, we prove that the structure of the problem remains unchanged. Let
$$\widehat{R}\Def R_4.$$
Owing to the previous construction and the monotonicity of $b$ and of the function $t\mapsto\frac{E(t)}{t}$, we have
\begin{gather*}
	b(t)\le \widehat{b}(t)+b(\widehat{R})\quad\text{for all}\ t\ge0,\\
	\widehat{A}(t)=A(t)\quad\text{for all}\ t\ge\widehat{R},\\
	\frac{E(t)}{t}\le \frac{\widehat{E}(t)}{t}+\frac{E(\widehat{R})}{\widehat{R}}\quad\text{for all}\ t\ge0.
\end{gather*}
Then, for \Ae $x\in\Omega$, \eqref{ellipticity}, \eqref{growth}, and \eqref{growth on f} yield:
\begin{gather*}
	|\A(x,\xi)|\le b(|\xi|)+\phi(x)\le \widehat{b}(|\xi|)+b(\widehat{R})+\phi(x)\quad\text{for all}\ \xi\in\R^n,\\
	\A(x,\xi)\cdot\xi\ge A(|\xi|)=\widehat{A}(|\xi|)\quad\text{for all}\ \xi\in\R^n,\ |\xi|\ge\widehat{R},\\
	|f(x,z)|\le \psi(x)+\frac{E(|z|)}{|z|}\le \psi(x)+\frac{E(\widehat{R})}{\widehat{R}}+\frac{\widehat{E}(|z|)}{|z|}\quad\text{for all}\ z\in\R.\\
\end{gather*}
Set $$\widehat{\phi}(x)=\phi(x)+b(\widehat{R})\quad\text{and}\quad\widehat{\psi}(x)=\psi(x)+\frac{E(\widehat{R})}{\widehat{R}}.$$
The $s$-integrability of $\widehat{\psi}$ is an immediate consequence of \eqref{requirements on psi}. The conclusion then follows once we find $\widehat{\lambda}$ such that \eqref{requirements on phi redefinition} is satisfied. If $r=\infty$, the latter is equivalent to requiring that $\widehat{\phi}$ is locally bounded, which is easily seen from \eqref{requirements on phi}. Now, assume $r<\infty$ and set $\widehat{\lambda}=\lambda/2$, where $\lambda$ comes from \eqref{requirements on phi}. Let $t'>0$ be such that
$$\widehat{b}^{-1}(t)=b^{-1}(t)\quad\text{for all}\ t\ge t'.$$
For any $\Omega'\Subset\Omega$, one has
\begin{align*}
	\int\limits_{\Omega'} |\widehat{A}\circ\widehat{b}^{-1}(\widehat{\lambda}\widehat{\phi})|^r\dex&\le \int\limits_{\Omega'\cap\{\phi\le b(t')\}} \big|\widehat{A}\circ\widehat{b}^{-1}\big(\lambda b(t')\big)\big|^r\dex+\int\limits_{\Omega'\cap\{\phi> b(t')\}} |\widehat{A}\circ\widehat{b}^{-1}(\lambda\phi)|^r\dex\\
	&\le \big|\widehat{A}\circ\widehat{b}^{-1}\big(\lambda b(t')\big)\big|^r|\Omega'|+\int\limits_{\Omega'} |\widehat{A}\circ\widehat{b}^{-1}(\lambda\phi)|^r\dex.
\end{align*}
By the definition of $\widehat{A}$ and the choice of $t'$, we have $\widehat{A}\circ\widehat{b}^{-1}(t)={A}\circ{b}^{-1}(t)$ for all $t\ge t'$. It then follows from \eqref{requirements on phi} that $|\widehat{A}\circ\widehat{b}^{-1}(\lambda\phi)|^r\in L^1(\Omega')$. Combining this with the previous chain of inequalities, we conclude that
$$\widehat{A}\circ\widehat{b}^{-1}(\widehat{\lambda}\widehat{\phi})\in L^r(\Omega'),$$
as desired.
\end{proof}
Note that, by \eqref{domination by power on Aob^-1},
\begin{equation*}
\reallybigtilde{A\circ b^{-1}}(\mu \cdot)\le  \Phi_m(\mu) \reallybigtilde{A\circ b^{-1}}(\cdot)\quad\text{for all}\ \mu\ge0,
\end{equation*}
where $ \Phi_m\colon[0,\infty)\to[0,\infty)$ is defined as
$$ \Phi_m(\mu)=\begin{cases}
\mu &\mbox{if}\ 0\le \mu\le 1\\
\mu^m &\mbox{if}\ \mu>1.
\end{cases}$$
Given $u\in W^1K^A_{\mathrm{loc}}(\mathbb{B}_1)$, we set
$$\mathcal{F}_{\rho,\sigma}(u)=\int\limits_{\mathbb{B}_\sigma\setminus \mathbb{B}_\rho}A(|u|)+A(|\nabla u|)\dex.$$
A key ingredient in the proof of Theorem \ref{boundedness theorem} is the following result. Its proof proceeds as in Lemma $5.1$ of \cite{CianchiSchaffner}, with Theorem \eqref{Orlicz-Sobolev on spheres} used in place of Theorem $\mathrm{C}$ therein.
\begin{lemma}\label{lemma construction of cut off}
Let $A$ and $B$ be as in the statement of Theorem \ref{boundedness theorem}, and assume that $B$ has super-linear growth. Let $m>1$ be as in \eqref{domination by power on Aob^-1}, and let $0<\rho<\sigma<1$.

$(i)$ Suppose that \eqref{first case main thm} holds and that \eqref{main condition} is in effect. Then, for every $u\in W^1K^A(\B_1)$ there exists a function $\eta\in W^{1,\infty}_0(\B_1)$ satisfying
\begin{equation}\label{definition of eta}
	0\le\eta\le1,\quad \eta=1\ \text{in}\ \B_\rho,\quad \eta=0\ \text{in}\ \B_1\setminus\B_\sigma,\quad \|\nabla \eta\|_{L^\infty(\B_1)}\le \frac{2}{\sigma-\rho},
\end{equation}
such that
\begin{equation}\label{first estimate lemma}
	\int\limits_{\B_1} \reallybigtilde{A\circ b^{-1}}(|u\nabla \eta|)\dex\le c \Phi_ m\Big(\frac{\gamma \mathcal{F}_{\rho,\sigma}(u)^{\frac{1}{n-1}}}{(\sigma-\rho)^{n'}\rho}\Big)\mathcal{F}_{\rho,\sigma}(u)
\end{equation}
for some positive constant $c=c(n,m,L)$.

$(ii)$ Suppose that \eqref{second case main thm} is in effect. Then, for every $u\in W^1K^A(\B_1)$ there exists a function $\eta\in W^{1,\infty}_0(\B_1)$ satisfying \eqref{definition of eta}, such that
\begin{equation}\label{second estimate lemma}
	\int\limits_{\B_1} \reallybigtilde{A\circ b^{-1}}(|u\nabla \eta|)\dex\le c\frac{\gamma^m}{(\sigma-\rho)^{m-1+\frac{m}{p}}\rho^{(n-1)(\frac{m}{p}-1)}}\mathcal{F}_{\rho,\sigma}(u)^{\frac{m}{p}}
\end{equation}
for some positive constant $c=c(n,m,p,L)$.\\
In $(i)$ and $(ii)$ above, $\gamma$ is the constant provided by parts $(i)$ and $(ii)$ of Theorem \ref{Orlicz-Sobolev on spheres}, respectively.
\end{lemma}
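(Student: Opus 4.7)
The proof follows the strategy of Lemma~5.1 in \cite{CianchiSchaffner}, with Theorem~\ref{Orlicz-Sobolev on spheres} substituted for the spherical Sobolev inequality employed there. The cut-off $\eta$ is sought among radial functions $\eta(x)=\zeta(|x|)$, with $\zeta\colon[0,1]\to[0,1]$ equal to $1$ on $[0,\rho]$ and to $0$ on $[\sigma,1]$; the constraints in \eqref{definition of eta} then hold as soon as $|\zeta'|\le 2/(\sigma-\rho)$. Passing to spherical coordinates one gets
\begin{equation*}
\int_{\B_1}\reallybigtilde{A\circ b^{-1}}(|u\nabla\eta|)\dex=\int_\rho^\sigma r^{n-1}\int_{\Sph^{n-1}}\reallybigtilde{A\circ b^{-1}}\bigl(|u(r\omega)|\,|\zeta'(r)|\bigr)\,d\mathcal{H}^{n-1}(\omega)\,dr,
\end{equation*}
which reduces the problem to a family of Orlicz estimates on the unit sphere. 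Setting
\begin{equation*}
\Psi(r)\Def\int_{\partial\B_r}A(|u|)+A(|\nabla_{\Sph}u|)\deH,
\end{equation*}
Fubini together with $|\nabla_{\Sph}u|\le|\nabla u|$ yields $\int_\rho^\sigma\Psi(r)\,dr\le\mathcal{F}_{\rho,\sigma}(u)$, so only this aggregate bound on $\Psi$ will enter the final estimates.

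For part~(i), my plan is first to convert $\reallybigtilde{A\circ b^{-1}}$ into $\widehat A_{n-1}(\widehat L\,\cdot)$ through the balance inequality \eqref{main condition redefinition}, and then apply Theorem~\ref{Orlicz-Sobolev on spheres}\,(i) slicewise to $\omega\mapsto u(r\omega)$ on $\Sph^{n-1}$. Rescaling the tangential gradient (the tangential gradient of $u(r\cdot)$ on $\Sph^{n-1}$ equals $r|\nabla_{\Sph}u|(r\omega)$) and using \eqref{domination by power on Aob^-1} to extract the scalar $|\zeta'(r)|$ via the bump $\Phi_m$, I expect a pointwise-in-$r$ estimate of the form
\begin{equation*}
\int_{\Sph^{n-1}}\reallybigtilde{A\circ b^{-1}}\bigl(|u(r\omega)||\zeta'(r)|\bigr)\,d\mathcal{H}^{n-1}(\omega)\le c\,\Phi_m\!\bigg(\frac{c\gamma\,|\zeta'(r)|\,\Psi(r)^{1/(n-1)}}{r}\bigg)\frac{\Psi(r)}{r^{n-1}}.
\end{equation*}
Integrating against $r^{n-1}\,dr$, using $r\ge\rho$, and minimizing over $\zeta'$ under the constraint $\int_\rho^\sigma|\zeta'(r)|\,dr=1$ via a Lagrange/Hölder argument then yields \eqref{first estimate lemma}: the optimal slope profile concentrates where $\Psi$ is small, and when it is substituted back and $\Phi_m$ is pulled outside by Jensen's inequality, the claimed argument $\mathcal{F}_{\rho,\sigma}(u)^{1/(n-1)}/((\sigma-\rho)^{n'}\rho)$ is produced.

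Under the alternative assumption \eqref{second case main thm}, Theorem~\ref{Orlicz-Sobolev on spheres}\,(ii) applies slicewise and gives the uniform bound $\|u(r\cdot)\|_{L^\infty(\Sph^{n-1})}\le\gamma\,\Psi(r)^{1/p}$ (using \eqref{A hat dominates curve globally} to supply the power lower bound $A(t)\gtrsim t^p$). No optimization over $\zeta'$ is required: the uniform choice $\zeta'\equiv 1/(\sigma-\rho)$ suffices. Substituting the $L^\infty$ estimate and invoking the global polynomial majorisation $\reallybigtilde{A\circ b^{-1}}(t)\le\widehat L t^m$ from \eqref{second condition redefinition} yields
\begin{equation*}
\reallybigtilde{A\circ b^{-1}}\!\Big(\tfrac{|u(r\omega)|}{\sigma-\rho}\Big)\le \widehat L\,\bigg(\frac{\gamma\,\Psi(r)^{1/p}}{\sigma-\rho}\bigg)^{\!m},
\end{equation*}
and integrating first on $\Sph^{n-1}$, then in $r$ (with $r^{n-1}$ controlled against $\rho^{n-1-m}$ and $\Psi^{m/p}$ bounded by $\mathcal{F}_{\rho,\sigma}(u)^{m/p}$ through Jensen, using $m>pn'$), delivers \eqref{second estimate lemma}.

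The main obstacle will be case~(i): the non-homogeneity of $\reallybigtilde{A\circ b^{-1}}$ forces the slope $|\zeta'|$ to be extracted only through the piecewise power $\Phi_m$ supplied by \eqref{domination by power on Aob^-1}, and this prefactor has to survive both the rescaling of the spherical Sobolev inequality from $\partial\B_r$ to $\Sph^{n-1}$ and the subsequent one-dimensional optimization over $\zeta'$. The exact exponents $(\sigma-\rho)^{-n'}$ and $\rho^{-1}$ appearing in \eqref{first estimate lemma} are produced by a careful bookkeeping of where Jensen is applied—before or after pulling $\Phi_m$ outside the $r$-integral—and any misplacement of a factor would break compatibility with the balance condition \eqref{main condition redefinition}.
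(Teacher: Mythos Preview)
Your overall approach matches the paper's, which simply refers to Lemma~5.1 in \cite{CianchiSchaffner} with Theorem~\ref{Orlicz-Sobolev on spheres} substituted in. The sketch for part~(i) is essentially correct.

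There is, however, a genuine gap in your treatment of part~(ii). The claim that ``no optimization over $\zeta'$ is required'' and that the uniform choice $\zeta'\equiv 1/(\sigma-\rho)$ suffices is false. With that choice you would need to control
\[
\int_\rho^\sigma \Psi(r)^{m/p}\,dr
\]
in terms of $\mathcal{F}_{\rho,\sigma}(u)^{m/p}$, knowing only $\int_\rho^\sigma\Psi(r)\,dr\le\mathcal{F}_{\rho,\sigma}(u)$. But $m>pn'$ forces $m/p>1$, so $t\mapsto t^{m/p}$ is convex and Jensen's inequality goes the \emph{wrong} way. No such bound is available.

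The optimization over $\zeta'$ is just as essential in~(ii) as in~(i). Taking $|\zeta'(r)|$ proportional to $\Psi(r)^{-1/p}$ (normalised to have unit integral, and suitably truncated to respect the Lipschitz bound in \eqref{definition of eta}) gives
\[
\int_\rho^\sigma |\zeta'(r)|^m\,\Psi(r)^{m/p}\,dr=\frac{\sigma-\rho}{\bigl(\int_\rho^\sigma\Psi^{-1/p}\,dr\bigr)^m}.
\]
Now Jensen applied to the convex function $t\mapsto t^{-1/p}$ yields $\int_\rho^\sigma\Psi^{-1/p}\,dr\ge(\sigma-\rho)^{1+1/p}\,\mathcal{F}_{\rho,\sigma}(u)^{-1/p}$, and substituting this produces exactly the factor $(\sigma-\rho)^{-(m-1+m/p)}\mathcal{F}_{\rho,\sigma}(u)^{m/p}$ appearing in \eqref{second estimate lemma}. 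This is what the argument in \cite{CianchiSchaffner} does, and it is not optional.
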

Finally, we recall the following classical result --- see Lemma $6.1$ in \cite{Giusti}.
\begin{lemma}\label{algebraic lemma}
Let $Z\colon[\rho,\sigma]\to[0,\infty)$ be a bounded function. Assume that for $\rho\le r_1\le r_2\le \sigma$ we have
$$Z(r_1)\le \theta Z(r_2) +a(r_2-r_1)^{-\beta}+b$$
for some $\theta\in[0,1)$, $\beta>0$ $a,b\ge0$. Then
$$Z(\sigma)\le c(\sigma-\rho)^{-\beta}a+cb$$
for some constant $c=c(\theta,\beta)>1$.
\end{lemma}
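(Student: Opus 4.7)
\textit{Plan.} The argument is the classical telescoping iteration of Giusti's Lemma~6.1 of \cite{Giusti}. The strategy is to apply the hypothesis along a geometric sequence of intermediate radii and exploit the decay factor $\theta^N$, together with the global boundedness of $Z$, to absorb the residual tail.

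Concretely, I fix an auxiliary parameter $\tau \in (0,1)$ satisfying $\theta\tau^{-\beta} < 1$; since $\theta \in [0,1)$ and $\beta>0$, any $\tau \in (\theta^{1/\beta}, 1)$ does the job. I then construct the geometric sequence of intermediate points
\[
\rho_k = \rho + (1 - \tau^k)(\sigma-\rho), \qquad k = 0, 1, 2, \ldots,
\]
which starts at $\rho_0 = \rho$, is strictly increasing, accumulates at $\sigma$, and has consecutive gaps $\rho_{k+1} - \rho_k = (1-\tau)\tau^k(\sigma-\rho)$.

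Applying the hypothesis to each pair $(r_1, r_2) = (\rho_k, \rho_{k+1})$ produces
\[
Z(\rho_k) \le \theta\, Z(\rho_{k+1}) + \bigl[(1-\tau)(\sigma-\rho)\bigr]^{-\beta}\, \tau^{-k\beta}\, a + b,
\]
and iterating this inequality $N$ times telescopes to
\[
Z(\rho_0) \le \theta^N Z(\rho_N) + a\bigl[(1-\tau)(\sigma-\rho)\bigr]^{-\beta} \sum_{k=0}^{N-1}(\theta \tau^{-\beta})^k + b \sum_{k=0}^{N-1} \theta^k.
\]
By the choice of $\tau$, both series are convergent geometric ones, with sums depending only on $\theta$ and $\beta$. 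The global bound on $Z$ together with $\theta^N \to 0$ forces the residual term $\theta^N Z(\rho_N)$ to vanish as $N \to \infty$, delivering the desired estimate with a constant $c = c(\theta, \beta) > 1$.

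The only step that requires genuine care is the selection of $\tau$: it must strictly exceed $\theta^{1/\beta}$ (so that the first geometric ratio $\theta\tau^{-\beta}$ is less than $1$ and the series converges) while remaining below $1$ (so that the sequence $\rho_k$ indeed accumulates at the opposite endpoint of the interval, justifying the limit step). Beyond this balance between the two conditions on $\tau$, the argument is pure geometric-series bookkeeping, and no further obstacle is expected.
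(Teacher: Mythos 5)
Your argument is correct and is exactly the classical telescoping proof of Giusti's Lemma 6.1, which the paper simply cites without reproducing a proof. The choice $\tau\in(\theta^{1/\beta},1)$, the interpolating sequence $\rho_k=\rho+(1-\tau^k)(\sigma-\rho)$, the telescoping estimate
\[
Z(\rho_0)\le \theta^N Z(\rho_N)+a\big[(1-\tau)(\sigma-\rho)\big]^{-\beta}\sum_{k=0}^{N-1}(\theta\tau^{-\beta})^k+b\sum_{k=0}^{N-1}\theta^k,
\]
and the passage $N\to\infty$ using the boundedness of $Z$ are all in order, and the resulting constant $c=\max\bigl\{(1-\tau)^{-\beta}(1-\theta\tau^{-\beta})^{-1},\,(1-\theta)^{-1}\bigr\}$ depends only on $\theta$ and $\beta$, as required.

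One remark, not a gap in your proof but in the paper's transcription of the lemma: your iteration yields a bound on $Z(\rho_0)=Z(\rho)$, i.e.\ the value at the \emph{smaller} endpoint, which is what Giusti's Lemma 6.1 actually asserts and what the paper uses later (bounding the integral over $\Omega_{k,\rho}$ by quantities on $\B_\sigma$). The ``$Z(\sigma)$'' appearing in the stated conclusion is a typo for ``$Z(\rho)$''; indeed a bound on $Z(\sigma)$ could not follow from the hypothesis, which becomes vacuous at $r_1=r_2=\sigma$. You have proved the correct version.
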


\section{Proof of Theorem \ref{boundedness theorem}}\label{Proof of the Theorem}
Without loss of generality, we assume that all the properties listed in Lemma \ref{redefinition of functions and parameters} hold.

Let $u\in W^1L^B_{\mathrm{loc}}(\Omega)$ be a weak solution to Equation \eqref{equation}. By scaling and translation arguments, it suffices to assume that $\mathbb{B}_1\Subset\Omega$ and to prove that $u$ is locally bounded in $\mathbb{B}_{\sfrac{1}{2}}$. Hence, from this point on we assume that $\mathbb{B}_1\Subset\Omega$. We also fix a positive constant $\gamma_n$, depending only on $n$, such that \eqref{first case orlicz-sobolev} holds for every $\rho\in[\frac{1}{2},1]$.
\subsection{Caccioppoli-type estimates}
For every $k\ge0$ and $0<\rho<1$ we set
$$\Omega_{k,\rho}\Def\{x\in\Omega : |u(x)|> k\}\cap \B_\rho$$
and
$$M_{k,\rho}\Def\int\limits_{\B_\rho}A( (|u|-k)_+)+A(|\nabla(|u|-k)_+  |)\dex.$$
We also define the measurable function
$$G_k(u)\Def(|u|-k)_+\mathrm{sign}(u).$$
As one can verify, $G_k(u)\in W^1 L^B_{\mathrm{loc}}(\Omega)$ and $\nabla G_k(u)=\nabla u \chi_{\{|u|>k\}}$.

Let $p$ and $m$ be as in the statement of Lemma \eqref{redefinition of functions and parameters}. For notation convenience we denote by $p^\star$ the number $\frac{np}{n-p}$.

To begin, we prove the following lemma.
\begin{lemma}\label{caccioppoli}
$(1)$ Suppose that $B$ has super-linear growth. Then:

$(i)$ If \eqref{first case main thm} holds and \eqref{main condition} is fulfilled, there exists a positive constant $c=c(A,b,n,m,L,\lambda,R)$ such that
\begin{multline}\label{1 caccioppoli}
	\int\limits_{ \Omega _{k,\rho}}A(|\nabla u|)\dex\le \int\limits_{ \Omega _{k,\sigma}}E(|u|)\dex+ c\bigg(\frac{ \Phi_m(M_{k,\sigma}^{\frac{1}{n-1}})}{(\sigma-\rho)^{m+\frac{m}{n-1}}}M_{k,\sigma}+|\Omega _{k,\sigma}|^{\frac{1}{r'}}\|A\circ b^{-1}(\lambda\phi)\|_{L^r(\Omega _{k,\sigma})}+\\+\|\psi\|_{L^{ s }(\Omega_{k,\sigma})} M_{k,\sigma}^{\frac{1}{p}}|\Omega_{k,\sigma}|^{1-\frac{1}{s}-\frac{1}{p^\star}}+|\Omega_{k,\sigma}|\bigg)
\end{multline}
for all $k\ge0$ and all $\frac{1}{2}\le \rho<\sigma<1$.

$(ii)$ If \eqref{second case main thm} holds, then there exists a positive constant $c=c(A,b,n,m,p,L,\lambda,R)$ such that
\begin{multline}\label{2 caccioppoli}
	\int\limits_{ \Omega _{k,\rho}}A(|\nabla u|)\dex\le \int\limits_{ \Omega _{k,\sigma}}E(|u|)\dex+  c \bigg(\frac{1}{(\sigma-\rho)^{m+\frac{m}{p}}}M_{k,\sigma}^{\frac{m}{p}}+|\Omega _{k,\sigma}|^{\frac{1}{r'}}\|A\circ b^{-1}(\lambda\phi)\|_{L^r(\Omega _{k,\sigma})}+\\+\|\psi\|_{L^{ s }(\Omega_{k,\sigma})} M_{k,\sigma}^{\frac{1}{p}}|\Omega_{k,\sigma}|^{1-\frac{1}{ s }-\frac{1}{p^\star}}+|\Omega _{k,\sigma}|\bigg)
\end{multline}
for all $k\ge0$ and all $\frac{1}{2}\le \rho<\sigma<1$.

$(2)$ Suppose that $B$ has linear growth. Then, there exists a positive constant $c=c(A,n,L,R)$ such that
\begin{multline}\label{0 caccioppoli}
	\int\limits_{\Omega _{k,\rho}}A(|\nabla u|)\dex\le \int\limits_{ \Omega _{k,\sigma}}E(|u|)\dex+ c\bigg(\big(1+\|\phi\|_{L^\infty{(\Omega _{k,\sigma})}}\big)\frac{1}{\sigma-\rho}|\Omega _{k,\sigma}|^{\frac{1}{n}}M_{k,\sigma}+\\+\|\psi\|_{L^{s}(\Omega_{k,\sigma})}|\Omega_{k,\sigma}|^{\frac{1}{n}-\frac{1}{s}}M_{k,\sigma} +|\Omega_{k,\sigma}|\bigg)
\end{multline}
for all $k\ge0$ and all $\frac{1}{2}\le \rho<\sigma<1$.
\end{lemma}
\begin{proof} We divide the proof into two parts, according to the growth rate of $B$.

\emph{Part $1$: $B$ has super-linear growth.} Here, \eqref{assumption 2} holds. Relying on Lemma \eqref{lemma construction of cut off}, the argument varies slightly depending on whether \eqref{first case main thm} or \eqref{second case main thm} applies.

$(i)$ First assume that \eqref{first case main thm} is in effect, and \eqref{main condition} is satisfied. Fix $k\ge0$ and radii satisfying $\frac{1}{2}\le\rho<\sigma<1$. We divide the proof into steps.

\emph{Step $1$.} Let $\eta$ be the cut-off function provided by part $(i)$ of Lemma \eqref{lemma construction of cut off} applied to $ (|u|-k)_+  $. Choosing $v=\eta^m  G_k(u) $ as a test function in \eqref{weak formulation} yields
\begin{equation*}
	\int\limits_{ \Omega _{k,\sigma}} \eta^m\A(x,\nabla u)\cdot \nabla u\dex=\int\limits_{ \Omega _{k,\sigma}\setminus \B_\rho} -m\eta^{m-1} G_k(u) \A(x,\nabla u)\cdot \nabla \eta  \dex +\int\limits_{ \Omega _{k,\sigma}} f(x,u)\eta^m  G_k(u)\dex.
\end{equation*}
Applying \eqref{ellipticity} on the left-hand side and \eqref{growth} on the right-hand side we infer
\begin{align}
	\int\limits_{ \Omega _{k,\rho}\cap\{|\nabla u|\ge R\}} A(|\nabla u|)&\dex\le m\int\limits_{ \Omega _{k,\sigma}\setminus \B_\rho} \eta^{m-1} (|u|-k)_+  |\A(x,\nabla u)\cdot \nabla \eta|\dex+\int\limits_{ \Omega _{k,\sigma}}|f(x,u)| (|u|-k)_+  \dex\nonumber\\
	&\le m\int\limits_{ \Omega _{k,\sigma}\setminus \B_\rho} b(|\nabla u|)  (|u|-k)_+   |\nabla \eta|+\phi (|u|-k)_+   |\nabla \eta|\dex+\int\limits_{ \Omega _{k,\sigma}}|f(x,u)| (|u|-k)_+  \dex.\label{first case caccioppoli}
\end{align}
We want to estimate the integrals on the right-hand side of \eqref{first case caccioppoli}.

\emph{Step $2$.} We estimate the first integral in \eqref{first case caccioppoli}. Note first that the following inequalities hold:
\begin{align}
	\int\limits_{ \Omega _{k,\sigma}\setminus \B_\rho}  (|u|-k)_+  b(|\nabla u|)|\nabla \eta|\dex &\le \int\limits_{ \Omega _{k,\sigma}\setminus \B_\rho}A(|\nabla u|)\dex+\int\limits_{ \Omega _{k,\sigma}\setminus \B_\rho}\reallybigtilde{A\circ b^{-1}}( (|u|-k)_+  |\nabla \eta|)\dex\nonumber\\
	&\le\int\limits_{ \Omega _{k,\sigma}\setminus \B_\rho}A(|\nabla u|)\dex+c \frac{ \Phi_ m(\gamma \mathcal{F}_{\rho,\sigma}( (|u|-k)_+  )^{\frac{1}{n-1}})}{\rho^m(\sigma-\rho)^{n'm}}\mathcal{F}_{\rho,\sigma}( (|u|-k)_+  )\label{application lemma for caccioppoli 1}
\end{align}
for some constant $c=c(n,m,L)\ge1$, where the first inequality follows by applying Young's inequality to $A\circ b^{-1}$, whereas the second one is a consequence of \eqref{first estimate lemma}. Since $\rho\ge\frac{1}{2}$ and $\mathcal{F}_{\rho,\sigma}( (|u|-k)_+  )\le M_{k,\sigma}$, \eqref{application lemma for caccioppoli 1} produces
\begin{equation}\label{estimate first term first integral caccioppoli 1}
	\int\limits_{ \Omega _{k,\sigma}\setminus \B_\rho}  (|u|-k)_+  b(|\nabla u|)|\nabla \eta|\dex\le \int\limits_{ \Omega _{k,\sigma}\setminus \B_\rho}A(|\nabla u|)\dex+c \frac{ \Phi_ m(\gamma M_{k,\sigma}^{\frac{1}{n-1}})}{2^m(\sigma-\rho)^{n'm}}M_{k,\sigma}.
\end{equation}
In order to estimate
$$\int\limits_{ \Omega _{k,\sigma}\setminus \B_\rho}\phi (|u|-k)_+ |\nabla \eta|\dex,$$
recall that, by \eqref{summability inequality redefinition} and \eqref{requirements on phi redefinition}, $A\circ b^{-1}(\lambda\phi)\in L^r_{\mathrm{loc}}(\Omega)$, with $r>\frac{n}{p}$. Then, the following chain of inequalities holds:
\begin{align}
	\int\limits_{ \Omega _{k,\sigma}\setminus \B_\rho}\phi (|u|-k)_+   |\nabla \eta|\dex &\le \int\limits_{ \Omega _{k,\sigma}}A\circ b^{-1}(\lambda\phi)\dex+\int\limits_{ \Omega _{k,\sigma}}\reallybigtilde{A\circ b^{-1}}\Big( \frac{(|u|-k)_+ |\nabla \eta|}{\lambda}\Big)\dex\nonumber\\
	&\le |\Omega _{k,\sigma}|^{1-\frac{1}{r}}\|A\circ b^{-1}(\lambda\phi)\|_{L^r(\Omega _{k,\sigma})}+c\int\limits_{ \Omega _{k,\sigma}}\reallybigtilde{A\circ b^{-1}}\big( {(|u|-k)_+|\nabla \eta|}\big)\dex\nonumber\\
	&\le |\Omega _{k,\sigma}|^{1-\frac{1}{r}}\|A\circ b^{-1}(\lambda\phi)\|_{L^r(\Omega _{k,\sigma})}+c\frac{ \Phi_ m(\gamma M_{k,\sigma}^{\frac{1}{n-1}})}{(\sigma-\rho)^{n'm}}M_{k,\sigma}
	\label{estimate second term first integral caccioppoli 1}
\end{align}
for some $c=c(n,m,L,\lambda)\ge1$, where in the first inequality we have applied Young's inequality, in the second one H\"older's inequality and the $\Delta_2$-condition on $\reallybigtilde{A\circ b^{-1}}$, and in the third one the estimate in \eqref{first estimate lemma}, and the inequalities $\rho\ge\frac{1}{2}$ and $\mathcal{F}_{\rho,\sigma}( (|u|-k)_+  )\le M_{k,\sigma}$.\\
The estimates \eqref{estimate first term first integral caccioppoli 1} and \eqref{estimate second term first integral caccioppoli 1} yield
\begin{multline}\label{first estimate first case caccioppoli}
	\int\limits_{\Omega _{k,\sigma}\setminus \B_\rho} b(|\nabla u|) (|u|-k)_+ |\nabla \eta|+\phi (|u|-k)_+   |\nabla \eta|\dex\le\int\limits_{ \Omega _{k,\sigma}\setminus \B_\rho}A(|\nabla u|)\dex+\\
	+c\bigg( \frac{ \Phi_ m(\gamma M_{k,\sigma}^{\frac{1}{n-1}})}{(\sigma-\rho)^{n'm}}M_{k,\sigma}+|\Omega _{k,\sigma}|^{\frac{1}{r'}}\|A\circ b^{-1}(\lambda\phi)\|_{L^r(\Omega _{k,\sigma})}\bigg)
\end{multline}
for some $c=c(A,b,n,m,L,\lambda)\ge1$.

\emph{Step $3$.} Let us consider the second integral in \eqref{first case caccioppoli}. Taking into account \eqref{summability inequality redefinition}, the assumption \eqref{growth on f} together with H\"older's inequality implies
\begin{align*}
	\int\limits_{\Omega _{k,\sigma}}|f(x,u)|\eta^m  (|u|-k)_+  \dex&\le \int\limits_{ \Omega _{k,\sigma}}\psi\eta^m  (|u|-k)_+  \dex+\int\limits_{ \Omega _{k,\sigma}}\frac{E(|u|)}{|u|}\eta^m  (|u|-k)_+  \dex\nonumber\\
	&\le\|\psi\|_{L^{ s }(\Omega_{k,\sigma})}\| (|u|-k)_+  \|_{L^{p^\star}(\Omega _{k,\sigma})}|\Omega_{k,\sigma}|^{1-\frac{1}{ s }-\frac{1}{p^\star}}+\int\limits_{\Omega_{k,\sigma}}E(|u|)\dex.
\end{align*}
By \ref{sobolev conjugate dominates the conjugate of the lower exponent} and part $(i)$ of Theorem \ref{Orlicz-Sobolev embedding}, we have
\begin{equation}\label{estimate on pstar norm of u}
	\| (|u|-k)_+  \|_{L^{p^\star}(\Omega_{k,\sigma})}\le L\gamma_n M_{k,\sigma}^{\frac{1}{n}}\bigg( \int\limits_{\Omega_{k,\sigma}} A_n\bigg(\frac{ (|u|-k)_+  }{\gamma_n M_{k,\sigma}^\frac{1}{n}}\bigg)\dex\bigg)^\frac{1}{p^\star}\le L\gamma_n M_{k,\sigma}^{\frac{1}{n}+\frac{1}{p^\star}}=L\gamma_n M_{k,\sigma}^{\frac{1}{p}}.
\end{equation}
Therefore
\begin{equation}
	\int\limits_{ \Omega _{k,\sigma}}|f(x,u)|\eta^m  (|u|-k)_+  \dex\le L\gamma_n \|\psi\|_{L^{ s }(\Omega_{k,\sigma})} M_{k,\sigma}^{\frac{1}{p}}|\Omega_{k,\sigma}|^{1-\frac{1}{ s }-\frac{1}{p^\star}}+\int\limits_{ \Omega _{k,\sigma}}E(|u|)\dex.\label{second estimate first case caccioppoli}
\end{equation}

\emph{Step $4$.} Putting together \eqref{first case caccioppoli}, \eqref{first estimate first case caccioppoli}, and \eqref{second estimate first case caccioppoli} entails
\begin{multline*}
	\int\limits_{ \Omega _{k,\rho}\cap\{|\nabla u|\ge R\}} A(|\nabla u|)\dex\le m\int\limits_{ \Omega _{k,\sigma}\setminus \B_\rho}A(|\nabla u|)\dex+\int\limits_{ \Omega _{k,\sigma}}E(|u|)\dex+ \\+c\bigg(\frac{ \Phi_ m(M_{k,\sigma}^{\frac{1}{n-1}})}{(\sigma-\rho)^{n'm}}M_{k,\sigma}+|\Omega _{k,\sigma}|^{\frac{1}{r'}}\|A\circ b^{-1}(\lambda\phi)\|_{L^r(\Omega _{k,\sigma})}+\\+\|\psi\|_{L^{ s }(\Omega_{k,\sigma})} M_{k,\sigma}^{\frac{1}{p}}|\Omega_{k,\sigma}|^{1-\frac{1}{ s }-\frac{1}{p^\star}}\bigg)
\end{multline*}
for some $c=c(A,b,n,m,L,\lambda)\ge1$.
Since $\Omega_{k,\rho}$ splits into $\Omega_{k,\rho}\cap\{|\nabla u|\ge R\}$ and $ \Omega _{k,\rho}\cap\{|\nabla u|<R\}$, the previous formula yields
\begin{multline}\label{almost final caccioppoli}
	\int\limits_{ \Omega _{k,\rho}} A(|\nabla u|)\dex\le A(R) | \Omega _{k,\rho}|+ m\int\limits_{ \Omega _{k,\sigma}\setminus \B_\rho}A(|\nabla u|)\dex+\int\limits_{ \Omega _{k,\sigma}}E(|u|)\dex+\\+c\bigg(\frac{ \Phi_ m(M_{k,\sigma}^{\frac{1}{n-1}})}{(\sigma-\rho)^{n'm}}M_{k,\sigma}+|\Omega _{k,\sigma}|^{\frac{1}{r'}}\|A\circ b^{-1}(\lambda\phi)\|_{L^r(\Omega _{k,\sigma})}+\\+\|\psi\|_{L^{ s }(\Omega_{k,\sigma})} M_{k,\sigma}^{\frac{1}{p}}|\Omega_{k,\sigma}|^{1-\frac{1}{ s }-\frac{1}{p^\star}}\bigg).
\end{multline}
We now use the ``hole filling'' method, namely, we add
$$m\int\limits_{ \Omega _{k,\rho}} A(|\nabla u|)\dex$$
to both sides of \eqref{almost final caccioppoli}, and divide by $m+1$. We obtain
\begin{multline*}
	\int\limits_{ \Omega _{k,\rho}} A(|\nabla u|)\dex\le \frac{m}{m+1}\int\limits_{ \Omega _{k,\sigma}}A(|\nabla u|)\dex+\int\limits_{ \Omega _{k,\sigma}}E(|u|)\dex+\\+c\bigg(\frac{ \Phi_ m(M_{k,\sigma}^{\frac{1}{n-1}})}{(\sigma-\rho)^{n'm}}M_{k,\sigma}+|\Omega _{k,\sigma}|^{\frac{1}{r'}}\|A\circ b^{-1}(\lambda\phi)\|_{L^r(\Omega _{k,\sigma})}+\\+\|\psi\|_{L^{ s }(\Omega_{k,\sigma})} M_{k,\sigma}^{\frac{1}{p}}|\Omega_{k,\sigma}|^{1-\frac{1}{ s }-\frac{1}{p^\star}}+|\Omega _{k,\sigma}|\bigg),
\end{multline*}
for some constant $c=c(A,b,n,m,L,\lambda,R)\ge1$. Hence, the estimate \eqref{1 caccioppoli} follows from Lemma \ref{algebraic lemma}.

$(ii)$ Assume that \eqref{second case main thm} is in force. To prove \eqref{2 caccioppoli}, one can adapt the argument in part $(i)$. The main difference consists in using the estimate \eqref{second estimate lemma} on the right-hand side of \eqref{first case caccioppoli} to deduce
\begin{multline*}
	\int\limits_{ \Omega _{k,\sigma}\setminus \B_\rho} b(|\nabla u|)  (|u|-k)_+   |\nabla \eta|+\phi (|u|-k)_+   |\nabla \eta|\dex\le\int\limits_{ \Omega _{k,\sigma}\setminus \B_\rho}A(|\nabla u|)\dex+\\
	+c\bigg( \frac{\gamma^m}{(\sigma-\rho)^{m+\frac{m}{p}}}M_{k,\sigma}^{\frac{m}{p}}+|\Omega _{k,\sigma}|^{\frac{1}{r'}}\|A\circ b^{-1}(\lambda\phi)\|_{L^r(\Omega _{k,\sigma})}\bigg)
\end{multline*}
for some $c=c(A,b,n,m,L)\ge1$.

\emph{Part $2$: $B$ has linear growth.} In this setting, no absorption argument is needed. Fix a level $k\ge0$ and radii $\frac{1}{2}\le\rho<\sigma<1$. Let $\eta\in C^1_\mathrm{c}(\B_1)$ be any cut-off function corresponding to these radii, namely
	\begin{equation*}
		0\le\eta\le1,\quad \eta=1\ \text{in}\ \B_\rho,\quad \eta=0\ \text{in}\ \B_1\setminus\B_\sigma,\quad \|\nabla \eta\|_{L^\infty(\B_1)}\le \frac{2}{\sigma-\rho}.
	\end{equation*}
	Choosing $v=\eta G_k(u) $ as a test function in \eqref{weak formulation}, and exploiting \eqref{ellipticity} and \eqref{growth} as in the super-linear case, we deduce
	\begin{align}\label{inequality zero linear case caccioppoli}
		\int\limits_{ \Omega _{k,\rho}\cap\{|\nabla u|\ge R\}} A(|\nabla u|) \le \int\limits_{ \Omega _{k,\sigma}} (b(|\nabla u|)+\phi) |\nabla \eta|(|u|-k)_+\dex+\int\limits_{ \Omega _{k,\sigma}}|f(x,u)| (|u|-k)_+  \dex= I_1+I_2.
	\end{align}
	It suffices to estimate $I_1$. Since $B$ has linear growth, the assumption \eqref{requirements on phi redefinition} can be rewritten as
	$$\phi\in L^\infty_{\mathrm{loc}}(\Omega).$$
	Then, we have
\begin{align*}
I_1&\le(L+\|\phi\|_{L^\infty{(\Omega _{k,\sigma})}})\frac{2}{\sigma-\rho}\int\limits_{\Omega _{k,\sigma}} (|u|-k)_+   \dex\\
&\le (L+\|\phi\|_{L^\infty{(\Omega _{k,\sigma})}})\frac{2}{\sigma-\rho}|\Omega _{k,\sigma}|^{\frac{1}{n}}\|(|u|-k)_+\|_{L^{\frac{n}{n-1}}(\Omega _{k,\sigma})}\\
&\le (L+\|\phi\|_{L^\infty{(\Omega _{k,\sigma})}})\frac{2L\gamma_n}{\sigma-\rho}|\Omega _{k,\sigma}|^{\frac{1}{n}}M_{k,\sigma},
\end{align*}
where the first inequality follows from the bound in \eqref{b is bounded}, the second from H\"older's inequality, and the third from the last estimate in \eqref{estimate on pstar norm of u}. The conclusion follows by treating $I_2$ and the leftmost side of \eqref{inequality zero linear case caccioppoli} as in part $(i)$ of the proof of the super-linear case.
\end{proof}
\subsection{Recursive formula} Note that, for every $\sigma\in[\frac{1}{2},1]$, $k\mapsto M_{k,\sigma}$ is decreasing in $[0,\infty)$. We set
$$\widehat{c}\Def\max\{\gamma_n,1\}.$$
Noting that, by \eqref{summability inequality redefinition},
\begin{equation}\label{positive terms for last computations}
\frac{sp -n}{ s (n-p)}>0 \quad \text{and}\quad \frac{r'p-(r'-1)n}{r'(n-p)}>0,
\end{equation}
we now prove the following one step improvement.
\begin{lemma}\label{lemma one step improvement}
Let $\frac{1}{2}\le\rho<\sigma<1$. Suppose that there exists $h\ge0$ such that
\begin{equation}\label{first term of sequence small}
	\widehat{c}L M_{h,\sigma}\le 1.
\end{equation}
Let $\alpha=\alpha(n,p,r,s)$ be the positive constant
$$\alpha=\min\Big\{\frac{1}{n},\frac{sp-n}{s(n-p)},\frac{r'p-(r'-1)n}{r'(n-p)}\Big\}.$$
Then:

$(1)$ If $B$ has super-linear growth, there exists a positive constant $c=c(A,b,n,m,p,r,L,\lambda,R)$ such that, for every $k>h$,
\begin{equation}\label{one step improvement}
	M_{k,\rho}\le c\bigg(1+L^{\log_2(\frac{k}{k-h})}+\frac{1}{(\sigma-\rho)^{\beta}}+\frac{\|A\circ b^{-1}(\lambda\phi)\|_{L^r(\Omega _{k,\sigma})}}{(k-h)^{\sfrac{p^\star}{r'}}}+\frac{\|\psi\|_{L^{s}(\Omega_{k,\sigma})}}{(k-h)^{p^\star-1-\sfrac{p^\star}{s}}}+\frac{1}{(k-h)^{p^\star}}\bigg)M_{h,\sigma}^{1+\alpha},
\end{equation}
where $\beta=\beta(n,p,m)$ is given by
$$\beta=m+\max\Big\{\frac{m}{n-1},\frac{m}{p}\Big\}.$$

$(2)$ If $B$ has linear growth, there exists a positive constant $c=c(A,b,n,L,R)$ such that, for every $k>h$,
\begin{equation}\label{one step improvement bis}
M_{k,\rho}\le c\bigg(1+L^{\log_2(\frac{k}{k-h})}+\frac{1+\|\phi\|_{L^\infty{(\Omega _{k,\sigma})}}}{(\sigma-\rho)(k-h)^{\frac{1}{n-1}}}+\frac{\|\psi\|_{L^{s}(\Omega_{k,\sigma})}}{(k-h)^{\frac{s-n}{s(n-1)}}}+\frac{1}{(k-h)^{\frac{n}{n-1}}}\bigg)M_{h,\sigma}^{1+\alpha}.
\end{equation}
\end{lemma}
\begin{proof}
We derive \eqref{one step improvement} and \eqref{one step improvement bis} from Lemma \eqref{caccioppoli}. Let $k>h\ge0$ and note that, by \eqref{first term of sequence small}, $M_{k,\sigma}\le M_{h,\sigma}\le1$.

\emph{Step $1$.} We first estimate $|\Omega_{k,\sigma}|$. Observe that
\begin{multline*}
	| \Omega _{k,\sigma}|(k-h)^{p^\star}\le \int\limits_{ \Omega _{h,\sigma}} (|u|-h)^{p^\star}\dex= \gamma_n^{p^\star} M_{h,\sigma}^{\frac{p^\star}{n}}\int\limits_{ \Omega _{h,\sigma}} \bigg(\frac{(|u|-h)_+}{\gamma_n M_{h,\sigma}^{\frac{1}{n}}}\bigg)^{p^\star}\dex\\
	\le c\gamma_n^{p^\star} M_{h,\sigma}^{\frac{{p^\star}}{n}}\int\limits_{ \Omega _{h,\sigma}} A_n\bigg(\frac{|u|-h}{\gamma_n M_{h,\sigma}^{\frac{1}{n}}}\bigg)\dex\le c\gamma_n^{p^\star} M_{h,\sigma}^{1+\frac{p}{n-p}},
\end{multline*}
for some $c=c(n,R,p,A)>0$, where the second inequality follows from \ref{sobolev conjugate dominates the conjugate of the lower exponent}, while the third one follows from part $(i)$ of Theorem \eqref{Orlicz-Sobolev embedding}. We deduce that
\begin{equation}\label{estimate on measure}
	| \Omega _{k,\sigma}|\le c\frac{1}{(k-h)^{p^\star}}M_{h,\sigma}^{1+\frac{p}{n-p}}
\end{equation}
with $c=c(n,R,p,A)$.

\emph{Step $2$.} Let us estimate
$$\int\limits_{\Omega _{k,\sigma}}E(|u|)\dex.$$
Since $E$ is convex and it satisfies the $\Delta_2$-condition, for \Ae $x\in\Omega_{k,\sigma}$ it holds
$$E(|u(x)|)=E(|u(x)|-k+k)\le\frac{1}{2}\big(E\big(2(|u(x)|-k)\big)+E(2k)\big)\le \frac{L}{2}\big(E(|u(x)|-k)+E(k)\big).$$
From \eqref{growth inequalities for E} and the assumption \eqref{first term of sequence small} one infers that
\begin{align}
	\int\limits_{\Omega _{k,\sigma}}E(|u|-k)\dex\le \int\limits_{\Omega _{h,\sigma}}E(|u|-h)\dex&\le\int\limits_{\Omega _{h,\sigma}}A_n(L(|u|-h))\dex\nonumber\\
	& \le \gamma_n LM_{h,\sigma}^{\frac{1}{n}}\int\limits_{\Omega _{h,\sigma}}A_n\bigg(\frac{|u|-h}{\gamma_n M_{h,\sigma}^{\frac{1}{n}}}\bigg)\dex\le \gamma_n LM_{h,\sigma}^{1+\frac{1}{n}}\label{first inequality for estimating integral of E}
\end{align}
where in the third and fourth inequalities we have exploited the convexity of $A_n$ and part $(i)$ in Theorem \ref{Orlicz-Sobolev embedding}, respectively. On the other hand, by \eqref{growth inequalities for E} one has
\begin{align}
	\int\limits_{\Omega _{k,\sigma}}E(k)\dex&=E\bigg(\frac{k}{k-h}(k-h)\bigg)|\Omega _{k,\sigma}|\nonumber\\
	&\le L^{\lceil\log_2(\frac{k}{k-h})\rceil} E((k-h))|\Omega _{k,\sigma}|\nonumber\\
	&\le L^{\lceil\log_2(\frac{k}{k-h})\rceil}\int\limits_{\Omega _{k,\sigma}} E(|u|-h)\dex,\label{second inequality for estimating integral of E}
\end{align}
where $\lceil\cdot\rceil$ denotes the ceiling function. Then, the chains \eqref{first inequality for estimating integral of E} and \eqref{second inequality for estimating integral of E} produce the following estimate:
\begin{equation}\label{estimate on integral}
	\int\limits_{\Omega _{k,\sigma}}E(|u|)\dex\le \gamma_n\frac{L^2}{2}\big(1+L^{\lceil\log_2(\frac{k}{k-h})\rceil}\big)M_{h,\sigma}^{1+\frac{1}{n}}.
\end{equation}

\emph{Step $3$.} We now estimate
$$\int\limits_{\Omega _{k,\rho}}A(|u|-k)\dex.$$
Since $A\lesssim A_n$ near infinity, there exists a positive constant $c=c(n,A)$ such that
\begin{align*}
	\int\limits_{\Omega _{k,\rho}}A(|u|-k)\dex&\le \int\limits_{\Omega _{k,\rho}}A_n(|u|-k)\dex+c|\Omega _{k,\rho}|\\
	&\le \int\limits_{\Omega _{h,\sigma}}A_n(|u|-h)\dex+c|\Omega _{k,\sigma}|.
\end{align*}
As a consequence, \eqref{estimate on measure} entails
\begin{equation}\label{estimate on integral of A(|u|-k)}
	\int\limits_{\Omega _{k,\rho}}A( |u|-k )\dex\le \gamma_n M_{k,\sigma}^{1+\frac{1}{n}}+c\frac{1}{(k-h)^{p^\star}}M_{h,\sigma}^{1+\frac{p}{n-p}}\le \bigg(\gamma_n+c\frac{1}{(k-h)^{p^\star}}\bigg)M_{k,\sigma}^{1+\frac{1}{n}}
\end{equation}
for some $c=c(n,R,p,A)>0$.

\emph{Step $4$: $B$ has super-linear growth.} Assume that \eqref{first case main thm} is in effect. Adding
\begin{equation}\label{mass of u}
	\int_{\Omega_{k,\rho}} A(|u|-k),dx
\end{equation}
to both sides of \eqref{1 caccioppoli}, and estimating the right–hand side of this new inequality by means of \eqref{estimate on integral} and \eqref{estimate on integral of A(|u|-k)}, we obtain
\begin{multline*}
	M_{k,\rho}\le c\bigg(1+L^{\lceil\log_2(\frac{k}{k-h})\rceil}+\frac{1}{(k-h)^{p^\star}}\bigg)M_{k,\sigma}^{1+\frac{1}{n}}\\+ c\bigg(\frac{ \Phi_m(M_{k,\sigma}^{\frac{1}{n-1}})}{(\sigma-\rho)^{m+\frac{m}{n-1}}}M_{k,\sigma}+|\Omega _{k,\sigma}|^{\frac{1}{r'}}\|A\circ b^{-1}(\lambda\phi)\|_{L^r(\Omega _{k,\sigma})}+\\+\|\psi\|_{L^{ s }(\Omega_{k,\sigma})} M_{k,\sigma}^{\frac{1}{p}}|\Omega_{k,\sigma}|^{1-\frac{1}{s}-\frac{1}{p^\star}}+|\Omega_{k,\sigma}|\bigg)
\end{multline*}
for some $c=c(A,b,n,m,L,\lambda,R,p)>0$. Thus, to complete the proof, it suffices to estimate the second parenthesis above. Since $M_{k,\sigma}\le1$, we have
\begin{equation}\label{estimate on Phi}
	\Phi_m(M_{k,\sigma}^{\frac{1}{n-1}})M_{k,\sigma}=M_{k,\sigma}^{1+\frac{1}{n-1}}\le M_{h,\sigma}^{1+\frac{1}{n-1}}. 
\end{equation}
Furthermore, since $p^\star=\frac{np}{n-p}$ and $r'=\frac{r}{r-1}$, straightforward computations give
\begin{equation}\label{almost last computation of the proof}
	\frac{n}{n-p}\Big(1-\frac{1}{s}-\frac{1}{p^\star}\Big)+\frac{1}{p}
	=1+\frac{sp-n}{s(n-p)},
\end{equation}
and
\begin{equation}\label{last computation of the proof}
	\frac{n}{n-p}\frac{1}{r'}
	=1+\frac{r'p-(r'-1)n}{r'(n-p)},
\end{equation}
where both $\frac{sp -n}{s(n-p)}$ and $\frac{r'p-(r'-1)n}{r'(n-p)}$ are positive by \eqref{positive terms for last computations}. Thus, by \eqref{estimate on measure}, there exist constants $c_1=c_1(n,R,A,p,s)>0$ and $c_2=c_2(n,R,A,r)>0$ such that
\begin{equation*}
	|\Omega_{k,\sigma}|^{1-\frac{1}{ s }-\frac{1}{p^\star}}\le c_1\frac{1}{(k-h)^{p^\star(1-\frac{1}{ s }-\frac{1}{p^\star})}}M_{h,\sigma}^{1+\frac{sp -n}{s(n-p)}}\le c_1\frac{1}{(k-h)^{p^\star(1-\frac{1}{ s }-\frac{1}{p^\star})}}M_{h,\sigma}^{1+\alpha}
\end{equation*}
and
\begin{equation*}
	|\Omega_{k,\sigma}|^{\frac{1}{r'}}\le c_2\frac{1}{(k-h)^{\sfrac{p^\star}{r'}}}M_{h,\sigma}^{1+\frac{r'p-(r'-1)n}{r'(n-p)}}\le c_2\frac{1}{(k-h)^{\sfrac{p^\star}{r'}}}M_{h,\sigma}^{1+\alpha}.
\end{equation*}
Hence, inserting the above bounds into the energy estimate yields \eqref{one step improvement}, completing the argument.

If \eqref{second case main thm} is in force, the argument proceeds exactly as in the previous case, with the only difference that the estimate \eqref{estimate on Phi} is replaced by
$$M_{k,\sigma}^{\frac{m}{p}}\le M_{h,\sigma}^{1+\frac{1}{n-1}},$$
which follows from $m>pn'$ together with $M_{h,\sigma}\le1$.

\emph{Step $5$: $B$ has linear growth.} In this setting, we exploit the energy estimate in \eqref{0 caccioppoli}. Adding \eqref{mass of u} to both sides of the latter, \eqref{one step improvement bis} follows easily estimating the right-hand side via \eqref{estimate on measure}, \eqref{estimate on integral}, and \eqref{estimate on integral of A(|u|-k)}.
\end{proof}
Let $K>0$ to be chosen. For every $j\in\N$, set
$$h_j=K\bigg(1-\frac{1}{2^{j+1}}\bigg),\quad \sigma_j=\frac{1}{2}+\frac{1}{2^{j+2}},$$
and
$$M_j\Def M_{h_j,\sigma_j}.$$
Note that, thanks to the definitions of $\sigma_j$ and $h_j$, the sequence $(M_j)$ is decreasing.

If $\widehat{c}L M_j^{\frac{1}{n}}\le 1$ for some $j$, choosing $h=h_j$, $k=h_{j+1}$, $\sigma=\sigma_{j}$ , and $\rho=\sigma_{j+1}$ in Lemma \ref{lemma one step improvement}, the estimates \eqref{one step improvement} and \eqref{one step improvement bis} rewrite as
\begin{multline*}
	M_{j+1}\le c\bigg(1+L^{j+2}+2^{(j+3)\beta}+\|A\circ b^{-1}(\lambda\phi)\|_{L^r(\Omega_{K,\sfrac{3}{4}})}\frac{2^{(j+2)\sfrac{p^\star}{r'}}}{K^{\sfrac{p^\star}{r'}}}+                 \\
	+\|\psi\|_{L^{s}(\Omega_{K,\sfrac{3}{4}})}\frac{2^{(j+2)(p^\star-1-\sfrac{p^\star}{ s })}}{K^{p^\star-1-\sfrac{p^\star}{s}}}+\frac{2^{(j+2)p^\star}}{K^{p^\star}}\bigg)M_j^{1+\alpha}
\end{multline*}
and
$$\begin{aligned}
M_{j+1} \le c\bigg(1 + L^{j+2} + \frac{2^{\frac{2j+5}{n-1}}}{K^{\frac{1}{n-1}}}
\big(1+\|\phi\|_{L^\infty(\Omega_{K,\sfrac{3}{4}})}\big) + \|\psi\|_{L^{s}(\Omega_{K,\sfrac{3}{4}})}
\frac{2^{(j+2)\left(\frac{s-n}{s(n-1)}\right)}}{K^{\frac{s-n}{s(n-1)}}} + \frac{2^{(j+2)n'}}{K^{n'}}\bigg)
M_j^{1+\alpha}
\end{aligned}$$
for some $c=c(A,b,n,m,p,r,L,\lambda,R)>0$. This, in turn, implies
$$M_{j+1}\le c_K \tau^{j}M_j^{1+\alpha}$$
where $\tau=\max\{L,2^{\frac{2}{n-1}},2^{\beta},2^{p^\star}\}$ and
\begin{multline}\label{definition of cK}
c_K=c\max\bigg\{1+L^2+2^{3\beta}+\|A\circ b^{-1}(\lambda\phi)\|_{L^r(\Omega_{K,\sfrac{3}{4}})}\frac{2^{2\sfrac{p^\star}{r'}}}{K^{\sfrac{p^\star}{r'}}}+\|\psi\|_{L^{s}(\Omega_{K,\sfrac{3}{4}})}\frac{2^{2(p^\star-1-\sfrac{p^\star}{ s })}}{K^{p^\star-1-\sfrac{p^\star}{s}}}+\frac{2^{2p^\star}}{K^{p^\star}},\\
1+L^2+\frac{2^{\frac{5}{n-1}}}{K^{\frac{1}{n-1}}}
\big(1+\|\phi\|_{L^\infty(\Omega_{K,\sfrac{3}{4}})}\big) + \|\psi\|_{L^{s}(\Omega_{K,\sfrac{3}{4}})}
\frac{2^{2\left(\frac{s-n}{s(n-1)}\right)}}{K^{\frac{s-n}{s(n-1)}}} + \frac{2^{2n'}}{K^{n'}} \bigg\}.
\end{multline}
Now, we state the following:
\begin{claim}\label{claim J0 small}
One has $M_{k,\sfrac{3}{4}}\to0$ as $k\to\infty$. More precisely, there exists a positive constant $c=c(n,A)$ such that, for every $k>0$ satisfying
$$\int\limits_{\Omega_{k,\sfrac{3}{4}}}A(|\nabla u |)\dex\le (2\gamma_n)^{-n},$$
one has
$$M_{k,\sfrac{3}{4}}\le 2\int\limits_{\Omega_{k,\sfrac{3}{4}}}A(|\nabla  u  |)\dex+ c|\Omega_{k,\sfrac{3}{4}}|+A_n\big(2\big( (|u|-k)_+  \big){}_{\B_{\sfrac{3}{4}}}\big)|\Omega_{k,\sfrac{3}{4}}|.$$
\end{claim}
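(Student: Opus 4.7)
The plan is to split $M_{k,\sfrac{3}{4}}$ into its function and gradient contributions and handle them separately. Setting $v \Def (|u|-k)_+$, the gradient part is already
$$\int\limits_{\B_{\sfrac{3}{4}}} A(|\nabla v|)\dex = \int\limits_{\Omega_{k,\sfrac{3}{4}}} A(|\nabla u|)\dex,$$
which accounts for the leading term on the right hand side of the claim. The task reduces to estimating $\int_{\B_{\sfrac{3}{4}}} A(v)\dex$.

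First I would transfer from $A$ to $A_n$: since $A \lesssim A_n$ near infinity, exactly the argument used in Step $3$ of the proof of Lemma \ref{lemma one step improvement} yields
$$\int\limits_{\B_{\sfrac{3}{4}}} A(v)\dex \le \int\limits_{\B_{\sfrac{3}{4}}} A_n(v)\dex + c|\Omega_{k,\sfrac{3}{4}}|$$
for some $c=c(n,A)$. To control the right hand side, I would use the trivial decomposition $v = (v - (v)_{\B_{\sfrac{3}{4}}}) + (v)_{\B_{\sfrac{3}{4}}}$ together with the convexity of $A_n$ and $A_n(0)=0$ to obtain the pointwise bound
$$A_n(v) \le \tfrac{1}{2} A_n\bigl(2|v - (v)_{\B_{\sfrac{3}{4}}}|\bigr) + \tfrac{1}{2} A_n\bigl(2(v)_{\B_{\sfrac{3}{4}}}\bigr).$$
Since $v$ vanishes off $\Omega_{k,\sfrac{3}{4}}$, integrating the constant second term on $\B_{\sfrac{3}{4}}$ produces $\tfrac{1}{2} A_n(2(v)_{\B_{\sfrac{3}{4}}})|\Omega_{k,\sfrac{3}{4}}|$, which yields the last contribution appearing in the claim (with a slightly better constant).

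For the oscillation piece, Theorem \ref{Sobolev-Poincaré inequality} applied to $v$ on $\B_{\sfrac{3}{4}}$ gives
$$\int\limits_{\B_{\sfrac{3}{4}}} A_n\Bigl(\frac{|v - (v)_{\B_{\sfrac{3}{4}}}|}{\gamma M^{\sfrac{1}{n}}}\Bigr)\dex \le M, \qquad M \Def \int\limits_{\B_{\sfrac{3}{4}}} A(|\nabla v|)\dex,$$
where $\gamma=\gamma(n)$ is the very constant whose maximum with $1$ defines $\widehat c$. The smallness hypothesis then forces $M \le (2\gamma)^{-n}$, hence $\gamma M^{\sfrac{1}{n}} \le \tfrac{1}{2}$ and consequently $\tfrac{1}{\gamma M^{\sfrac{1}{n}}} \ge 2$; monotonicity of $A_n$ delivers $\int_{\B_{\sfrac{3}{4}}} A_n(2|v - (v)_{\B_{\sfrac{3}{4}}}|)\dex \le M$. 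Assembling these estimates with the gradient term produces the claimed bound.

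The convergence $M_{k,\sfrac{3}{4}} \to 0$ is a standard application of dominated convergence: the pointwise limits $(|u|-k)_+ \to 0$ and $\nabla(|u|-k)_+ \to 0$ a.e. hold as $k \to \infty$, while $A((|u|-k)_+) \le A(|u|)$ and $A(|\nabla(|u|-k)_+|) \le A(|\nabla u|)$ are integrable on $\B_{\sfrac{3}{4}}$ thanks to the inclusion $W^1L^B_{\mathrm{loc}}(\Omega) \subset W^1K^A_{\mathrm{loc}}(\Omega)$ already recorded just before Definition \ref{definition weak solution}. The only delicate point is aligning the Sobolev-Poincaré constant $\gamma$ with the one defining $\widehat c$, so that the threshold $(2\gamma)^{-n}$ translates precisely into $\gamma M^{\sfrac{1}{n}} \le \tfrac{1}{2}$; everything else reduces to convexity and integration.
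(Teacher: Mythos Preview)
Your proof is correct and follows essentially the same route as the paper: split off the gradient contribution, pass from $A$ to $A_n$ at the cost of $c|\Omega_{k,\sfrac34}|$, decompose $v$ about its mean via convexity, and absorb the oscillation term with the Sobolev--Poincar\'e inequality (Theorem~\ref{Sobolev-Poincaré inequality}) using the smallness hypothesis $\gamma M^{1/n}\le\tfrac12$. One small remark: the $\gamma$ appearing in the threshold $(2\gamma)^{-n}$ is the Sobolev--Poincar\'e constant from Theorem~\ref{Sobolev-Poincaré inequality}, not the Orlicz--Sobolev constant from Theorem~\ref{Orlicz-Sobolev embedding} that enters $\widehat c$; your identification of the two is not quite right, but it is irrelevant to the argument.
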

Recalling that
$$M_0=M_{\frac{K}{2},\frac{3}{4}}$$
and noting that, as a function of $K$, $c_K$ is bounded away from $0$, once the claim is established we can choose $K>0$ such that
\begin{equation*}
\widehat{c}L M_0^\frac{1}{n}\le1.
\end{equation*}
Then, the above argument yields the following recursive formula:
\begin{equation}\label{iterative formula}
M_{j+1}\le c_K \tau^{j} M_j^{1+\alpha}\quad\text{for all}\ j\in\N.
\end{equation}
It thus remains to prove that Claim \ref{claim J0 small} holds.
\begin{proof}[Proof of Claim \ref{claim J0 small}]
Since $u\in L^{B}_{\mathrm{loc}}(\Omega)$, $|\Omega_{k,\sfrac{3}{4}}|\to0$ as $k\to\infty$. Hence, by the dominated convergence theorem, the same happens to the integral
\begin{equation}\label{integral of gradient goes to zero}
	\int\limits_{\Omega_{k,\sfrac{3}{4}}}A(|\nabla  (|u|-k)_+  |)\dex.
\end{equation}
Then, we only have to prove that
$$\lim_{k\to\infty}\int\limits_{\Omega_{k,\sfrac{3}{4}}}A( (|u|-k)_+  )\dex=0.$$
One has:
\begin{multline}
	\int\limits_{\Omega_{k,\sfrac{3}{4}}}A(|u|-k)\dex\le c|\Omega_{k,\sfrac{3}{4}}|+\int\limits_{\Omega_{k,\sfrac{3}{4}}}A_n( (|u|-k)_+  )\dex\le\\
	\le c|\Omega_{k,\sfrac{3}{4}}|+A_n\big(2\big( (|u|-k)_+  \big){}_{\B_{\sfrac{3}{4}}}\big)|\Omega_{k,\sfrac{3}{4}}|+\\
	+\int\limits_{\Omega_{k,\sfrac{3}{4}}}A_n\big(2\big| (|u|-k)_+  -\big( (|u|-k)_+  \big){}_{\B_{\sfrac{3}{4}}}\big|\big)\dex\label{multline claim J0 small}
\end{multline}
where the first inequality comes from $A\lesssim A_n$ near infinity, and $c=c(n,A)$ is a positive constant. Let us show that the third integral on the right-hand side of \eqref{multline claim J0 small} tends to zero as $k\to\infty$. First, we note that, since the integral in \eqref{integral of gradient goes to zero} vanishes in the limit,
$$2\gamma_n\bigg(\int\limits_{\Omega_{k,\sfrac{3}{4}}}A(|\nabla u  |)\dex\bigg)^{\frac{1}{n}}\le 1$$
for $k$ large enough. Then, applying
Theorem \ref{Sobolev-Poincaré inequality} to $(|u|-k)_+$, we obtain
\begin{align*}
	\int\limits_{\Omega_{k,\sfrac{3}{4}}}A_n\big(2\big| (|u|-k)_+  -\big( (|u|-k)_+  \big){}_{\B_{\sfrac{3}{4}}}\big|\big)\dex&\le\int\limits_{\Omega_{k,\sfrac{3}{4}}}A_n\Bigg(\frac{\big| (|u|-k)_+  -\big( (|u|-k)_+  \big){}_{\B_{\sfrac{3}{4}}}\big|}{\gamma_n\big(\int_{\B_{\sfrac{3}{4}}}A(|\nabla  u  |)\dex\big)^{\frac{1}{n}}}\Bigg)\dex\\
	&\le\int\limits_{\Omega_{k,\sfrac{3}{4}}}A(|\nabla  u  |)\dex.
\end{align*}
Since the  last integral vanishes as $k\to\infty$, the conclusion follows.
\end{proof}

\subsection{Conclusion} Let $K>0$ be such that
$$\widehat{c}L M_0^\frac{1}{n}\le1,$$
so that \eqref{iterative formula} holds. The definitions of $(h_j)$ and $(\sigma_j)$ yield $h_j\le K$ and $\sigma_j\ge\frac{1}{2}$ for all $j\in\N$. Moreover
$$M_{K,\sfrac{1}{2}}\le \inf_{j\in\N}M_j=\lim_{j\to\infty}M_j.$$
If we can prove that $M_j\to0$ as $j\to\infty$ --- possibly after increasing $K$ if necessary --- we immediately deduce that $u$ is bounded in $\B_{\sfrac{1}{2}}$. Indeed, by the definition of $M_{K,\sfrac{1}{2}}$ and the previous formula, we obtain
$$\int\limits_{\B_{\sfrac{1}{2}}}A((|u|-K  )_+)\dex\le M_{K,\sfrac{1}{2}}=0,$$
which implies
\begin{equation}\label{u bounded from above}
|u|\le K\quad \text{\Ae in}\ \B_{\sfrac{1}{2}}.
\end{equation}
Therefore, to conclude it suffices to show that $(M_j)$ vanishes for sufficiently large $K$. We follow a classical idea going back to \cite{DeGiorgi57}: exploiting the recursive formula \eqref{iterative formula}, we prove that $(M_j)$ decays exponentially. More precisely, provided that
\begin{equation}
	c_K \tau^{\frac{1}{\alpha}}M_0^\alpha\le 1,\label{inequality to start the iteration}
\end{equation}
we claim
\begin{equation*}
M_j\le M_0 \tau^{-\frac{1}{\alpha}j}\quad\text{for all}\ j\in\N.
\end{equation*}
Setting $Y_j=\tau^{\frac{1}{\alpha}j} M_j$, it is sufficient to show that
\begin{equation}\label{iterative step}
Y_j\le Y_0\quad \text{for all}\ j\in\N,\ j\ge1.
\end{equation}
We proceed by induction on $j$. By \eqref{iterative formula},
$$Y_{j+1}\le c_K \tau^{\frac{1}{\alpha}} Y_j^{1+\alpha}\quad\text{for all}\ j\in\N.$$
For the base case, since $Y_0=M_0$, the inequality \eqref{inequality to start the iteration} entails
$$Y_1\le c_K \tau^{\frac{1}{\alpha}} Y_0^\alpha Y_0\le Y_0.$$
Assuming now that $Y_j\le Y_0$ for some $j\ge1$, we have
$$Y_{j+1}\le c_K \tau^{\frac{1}{\alpha}} Y_{j}^{1+\alpha}\le c_K \tau^{\frac{1}{\alpha}} Y_0^{1+\alpha}\le Y_0,$$
where the second inequality follows from the inductive hypothesis, and the third follows again from \eqref{inequality to start the iteration}. This establishes the claimed exponential decay of $(M_j)$, and therefore \eqref{u bounded from above} holds.
\section*{Acknowledgments}
The author is a member of GNAMPA of INdAM and is supported by the University of Florence, the University of Perugia, and INdAM. The author warmly thanks Andrea Cianchi for many valuable comments.
\printbibliography
\end{document}